\documentclass[a4paper]{amsart}
\usepackage{amsfonts,amsmath,amssymb,amscd,amstext,amsbsy,latexsym}
\vfuzz2pt 
\hfuzz2pt 
\newtheorem{thm}{Theorem}[section]
\newtheorem{cor}[thm]{Corollary}
\newtheorem{lem}[thm]{Lemma}
\newtheorem{prop}[thm]{Proposition}
\theoremstyle{definition}

\theoremstyle{remark}
\newtheorem{rem}[thm]{Remark}

\numberwithin{equation}{section}


\setlength{\topmargin}{0cm}\setlength{\oddsidemargin}{0cm}\setlength{\evensidemargin}{0cm}

\setlength{\headheight}{.25cm}\setlength{\headsep}{.25cm}

\setlength{\textwidth}{16cm}\setlength{\textheight}{24.2cm}

\setlength{\parindent}{0cm}\setlength{\parskip}{0cm}

\begin{document}

\title{Maximal subrings of Division Rings}%
\author{Alborz Azarang}%
\keywords{Maximal subrings, Division rings}%
\subjclass[2010]{16K20;16K40;16S85}%

\maketitle

\centerline{Department of Mathematics, Faculty of Mathematical Sciences and Computer,}
\centerline{ Shahid Chamran University
of Ahvaz, Ahvaz-Iran} \centerline{a${}_{-}$azarang@scu.ac.ir}
\centerline{ORCID ID: orcid.org/0000-0001-9598-2411}
\begin{abstract}
The structure and the existence of maximal subrings in division rings are investigated. We see that if $R$ is a maximal subring of a division ring $D$ with center $F$ and $N(R)\neq U(R)\cup \{0\}$, where $N(R)$ is the normalizer of $R$ in $D$, then either $R$ is a division ring with $[D:R]_l=[D:R]_r$ is finite or $R$ is an Ore $G$-domain with certain properties. In particular, if $F\subsetneq C_D(R)$, the centralizer of $R$ in $D$, then $R=C_D(\beta)$ is a division ring, for each $\beta\in C_R(R)\setminus F$, $[D:R]_l$ is finite if and only if $\beta$ is algebraic over $F$, $[D:R]_l=[D:R]_r=[F[\beta]:F]$ and $C_R(R)=F[\beta]$. On the other hand if $R$ does not contains $F$, then $R\cap F=C_R(R)$ is a maximal subring of $F$. Consequently, if a division ring $D$ has a noncentral element which is algebraic over the center of $D$, then $D$ has a maximal subring. In particular, we prove that  if $D$ is a non-commutative division ring with center $F$, then either $D$ has a maximal subring or $dim_F(D)\geq |F|$. We study when a maximal subring of a division ring is a left duo ring or certain valuation rings. Finally, we prove that if $D$ is an existentially complete division ring over a field $K$, then $D$ has a maximal subring of the form $C_D(x)$ where $D$ is finite over it. Moreover, if $R$ is a maximal subring of $D$ with $K\subsetneq C_R(R)$, then $R=C_D(x)$ for some $x\in D\setminus K$, which is algebraic over $K$.
\end{abstract}
\section{Introduction}
If $K$ is a field, then the structure of maximal subrings of $K$ are easy to characterize. In fact, it is not hard to see that a proper subring $R$ of $K$ is a maximal subring of $K$ if and only if either $R$ is a field and $[K:R]$ is finite of minimum degree or $R$ is a one-dimensional valuation for $K$ (in particular in this case $R$ is a $G$-domain with quotient field $K$). On the other hand, the existence of such subrings for fields exactly determined in \cite{azkrf}. In fact, most of fields have maximal subrings and in particular, a field without maximal subrings are certain absolutely algebraic fields (i.e., a certain field with nonzero characteristic which is algebraic over its prime subfield). More exactly a field $K$ has no maximal subrings if and only if there exists a prime number $p$ such that

$$K=\bigcup_{n|S}\mathbb{F}_{p^n}$$

where $\mathbb{F}_{p^n}$ is a field with $p^n$ elements and $S$ is a supper natural number (or a Steinitz's number) of the form $\prod_{i=1}^\infty q_i^{n_i}$, $\mathbb{P}=\{q_1=2, q_2=3, \ldots\}$ is the set of all prime numbers and $n_i$ is either $0$ or $\infty$, see \cite[Defenition 1.5]{azkrf} and \cite[Remark 2]{azffs}. Moreover, if $\mathcal{F}$ is the set of all fields, up to isomorphisms, without maximal subrings then $|\mathcal{F}|=2^{\aleph_0}$, see \cite[Corollary 1.15]{azkrf}. We refer the reader to \cite{azpa} for some facts about maximal subrings up to isomorphism of fields too. Also it is interesting to know that if $T$ is a commutative ring and $U(T)$ is not integral over the prime subring of $T$, then $T$ has a maximal subring, see \cite[Theorem 2.4]{azconch}.\\

In this paper motivated by the results for fields, we are interested to investigate the structure of maximal subrings and the existence of them for division rings. As we see in this paper the existence of maximal subrings of division rings is different from fields and depend to the center of division rings and noncentral elements, but the structure of maximal subrings in division rings is similar to fields. For example, if $D$ is a non-commutative division ring without maximal subrings, then each noncentral element of $D$ is not algebraic over the center of $D$ (and therefore is not algebraic over the prime subring of $D$), but if $R$ is a maximal subring of $D$, then in more situations either $R$ is a division ring with $[D:R]$ is finite with minimum degree or $R$ is an Ore $G$-domain with division ring of quotients $D$, and has certain algebraic properties similar to non-field maximal subrings of a field.\\

A brief outline of this paper is as follows. In the Section 2, we see that if $R$ is a maximal subring of a division ring $D$ with center $F$, then one of the following holds:
\begin{enumerate}
\item $R$ is a division ring with $[D:R]_l=[D:R]_r$ is finite (of minimum degree).
\item $R$ is an Ore $G$-domain with division ring of quotients $D$.
\item $N(R)=U(R)\cup \{0\}$, where $N(R)$ is the normalizer of $R$ in $D$.
\end{enumerate}
In particular, if $F\subsetneq C_D(R)$, the centralizer of $R$ in $D$, then $R=C_D(\beta)$ is a division ring, for each $\beta\in C_R(R)\setminus F$, $[D:R]_l$ is finite if and only if $\beta$ is algebraic over $F$, $[D:R]_l=[D:R]_r=[F[\beta]:F]$ and $C_R(R)=F[\beta]$. On the other hand if $R$ does not contain $F$, then $R\cap F=C_R(R)$ is a maximal subring of $F$. Consequently, if a division ring $D$ has a noncentral element which is algebraic over the center of $D$, then $D$ has a maximal subring. In particular, we prove that  if $D$ is a non-commutative division ring with center $F$, then either $D$ has a maximal subring or $dim_F(D)\geq |F|$. In Section 3, we study certain maximal subrings of division rings. In fact we want to show that certain maximal subrings of division rings are valuation rings, similar to non-field maximal subrings of a field. We prove that if a division ring $D$ has a maximal subring $R$ which is a left duo ring, then either $R$ is a division ring or $R$ is an Ore $G$-domain with exactly one nonzero prime ideal $M=R\setminus U(R)$; in fact $R$ is a duo ring and if $R$ is not a division ring, then $R$ is a valuation for $D$ (i.e., for each $x\in D$ either $x\in R$ or $x^{-1}\in R$), $Max_l(R)=Max_r(R)=Max(R)=Spec(R)\setminus\{0\}=\{M\}$, and for each $d\in D$, we have $dRd^{-1}=R$ and $dMd^{-1}=M$. We see that if $D$ is a division ring and $R$ is a certain maximal subring (of the form $({\bf 1-CNO})$ or $({\bf 2-CEO})$ of Theorem \ref{t6}) and $\beta\in R$ such that $\beta R=R\beta\neq R$, then $\bigcap_{n=1}^\infty R\beta^n=0$ and $R$ is a (left and right) bounded ring. In particular, if $R$ is a left duo ring and $R\beta$ is a prime ideal of $R$, then $R$ is a discrete valuation ring for $D$. We show that if $D$ is a division ring and $R$ is a maximal subring of $D$ which is not a division ring, then $R$ is an abelian valuation for $D$ if and only $D^c\subseteq R$ and in this case, $R$ is a duo ring. Finally, we prove that if $D$ is an existentially complete division ring over a field $K$, then $D$ has a maximal subring of the form $C_D(x)$ where $D$ is finite over it. Moreover, if $R$ is a maximal subring of $D$ with $K\subsetneq C_R(R)$, then $R=C_D(x)$ for some $x\in D\setminus K$, which is algebraic over $K$.\\

All rings in this paper are associative unital rings with $1\neq 0$ and all subrings, ring homomorphisms and modules are unital too. A proper subring $R$ of a ring $T$ is called a maximal subring if $R$ is maximal respect to inclusion in the set of all proper subrings of $T$. In fact, a proper subring $R$ of a ring $T$ is a maximal subring of $T$ if and only if for each $\alpha\in T\setminus R$ we have $R[\alpha]=T$. If $R$ is a maximal subring of a ring $T$, then the ring extension $R\subseteq T$ is called a minimal ring extension too. We refer the reader to \cite{adjex,dbsid,dbsc,frd,abmin} for the minimal extension of commutative rings and \cite{dorsy} for the minimal ring extension of non-commutative rings too. Also, we refer the reader to \cite{azn,azffs,azconch,azkra,azkrf,azkrc,azkrh,modica} for maximal subrings of commutative rings and \cite{azcond,azid,klein,laffey,lee} for maximal subrings of non-commutative rings. If $T$ is a ring and $X\subseteq T$, then the centralizer of $X$ in $T$ is denoted by $C_T(X)$ (for $X=\{x\}$, we use $C_T(x)$). In particular, $C(T):=C_T(T)$ is the center of $T$. If $K$ is a field which is contained, as subring, in the center of the ring $T$, then the vector dimension of $T$ over $K$ is denoted by $dim_K(T)$. If $T$ is a ring and $D$ is a subring of $T$ which is a division ring, then $[T:D]_l$ and $[T:D]_r$ denote the vector dimensions of $T$ over $D$ as left and right vector spaces, respectively. If $R$ is a ring and $M$ is a left (resp. right) $R$-module, then the uniform dimension of $M$ is denoted by $u.dim({}_RM)$ (resp. $u.dim(M_R)$). If $D$ is a division ring and $R$ is a subring of $D$, then we define $N_l(R):=\{x\in D\ |\ Rx\subseteq xR\}$, $N_r(R):=\{x\in D\ |\ xR\subseteq Rx\}$ and $N(R):=\{x\in D\ |\ xR=Rx\}$. It is clear that $N(R)=\{0\}\cup\{x\in D^*\ |\ xRx^{-1}=R\}$, hence is called the normalizer of $R$ in $D$. If $T$ is a ring, $Max_r(T)$, $Max_l(T)$, $Max(T)$ and $Spec(T)$, denote the set of all maximal right ideals of $T$, the set of all maximal left ideals of $T$, the set of all maximal ideals of $T$ and the set of all prime ideals of $T$, respectively. For a ring $T$, $T^*=T\setminus \{0\}$ and $U(T)$ is the set of all units of $T$. For the definitions of right/left Ore sets, left/right bounded rings, $G$-rings, valuations, left duo rings, discrete valuation rings, abelian valuation rings and existentially complete division rings we refer the reader to standard text books \cite{cohndr,good,hazw,hfad,lam,lam2,marbo,rvn}.

\section{Maximal subrings in Division Rings}
In this section we study the properties of maximal subrings in division rings. As we mentioned in the introduction of this paper, in fact we want to show that maximal subrings of division rings have properties similar to maximal subrings of fields. We begin this section by some simple facts about maximal subrings in division rings as follows.

\begin{lem}\label{t1}
Let $D$ be a division ring and $R$ be a maximal subring of $D$. If $\lambda\in D$, then $\lambda R\subseteq R\lambda\Longleftrightarrow R\lambda\subseteq \lambda R \Longleftrightarrow \lambda R=R\lambda$.
\end{lem}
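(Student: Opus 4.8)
The plan is to play the maximality of $R$ off against \emph{conjugate} subrings of $D$. The degenerate case $\lambda=0$ is vacuous ($\lambda R=R\lambda=\{0\}$), so assume $\lambda\in D^{*}$, whence $\lambda$ is a unit of $D$. The only structural input needed beyond maximality is the elementary observation that for any $\lambda\in D^{*}$ the set $\lambda^{-1}R\lambda$ is again a subring of $D$, being the image of the subring $R$ under the inner automorphism $x\mapsto\lambda^{-1}x\lambda$ of $D$ (and likewise $\lambda R\lambda^{-1}$).

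First I would suppose $\lambda R\subseteq R\lambda$. Multiplying on the left by $\lambda^{-1}$ gives $R\subseteq\lambda^{-1}R\lambda$, so $\lambda^{-1}R\lambda$ is a subring of $D$ lying between $R$ and $D$. By maximality of $R$, either $\lambda^{-1}R\lambda=R$ or $\lambda^{-1}R\lambda=D$; in the latter case, conjugating by $\lambda$ would force $R=\lambda D\lambda^{-1}=D$, contradicting that $R$ is a proper subring. Hence $\lambda^{-1}R\lambda=R$, and conjugating back by $\lambda$ yields $\lambda R\lambda^{-1}=R$, i.e. $\lambda R=R\lambda$; in particular $R\lambda\subseteq\lambda R$. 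The symmetric implication is handled the same way: if $R\lambda\subseteq\lambda R$, multiply on the \emph{right} by $\lambda^{-1}$ to get $R\subseteq\lambda R\lambda^{-1}$, a subring between $R$ and $D$; the alternative $\lambda R\lambda^{-1}=D$ again forces $R=D$, so $\lambda R\lambda^{-1}=R$, i.e. $\lambda R=R\lambda$, and in particular $\lambda R\subseteq R\lambda$. Finally, $\lambda R=R\lambda$ trivially implies both one-sided inclusions, so chaining these implications shows the three conditions are equivalent.

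There is no genuinely hard step here: the substance of the argument is simply that a one-sided containment such as $\lambda R\subseteq R\lambda$ already sandwiches $R$ inside the subring $\lambda^{-1}R\lambda$, after which maximality leaves only two possibilities, one of which ($\lambda^{-1}R\lambda=D$) is immediately impossible because $R$ is proper and conjugation by $\lambda$ is an automorphism of $D$. The only points requiring care are (i) disposing of the degenerate case $\lambda=0$ at the outset, and (ii) keeping straight on which side one multiplies by $\lambda^{-1}$ in each of the two implications.
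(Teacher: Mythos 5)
Your proof is correct and follows essentially the same route as the paper: both arguments conjugate $R$ by $\lambda$ and use maximality to upgrade the one-sided inclusion to equality. The only (immaterial) difference is that you invoke the maximality of $R$ sitting inside $\lambda^{-1}R\lambda$, while the paper observes that $\lambda R\lambda^{-1}$ is itself a maximal subring and that it is contained in the proper subring $R$.
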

\begin{proof}
We may assume that $\lambda\neq 0$. First note that for each nonzero $\lambda\in D$, it is easy to see that $\lambda R\lambda^{-1}$ is a maximal subring of $D$. Thus $\lambda R\subseteq R\lambda$ if and only if $\lambda R\lambda^{-1}\subseteq R$ if and only if $\lambda R\lambda^{-1}=R$ (by the maximality of $\lambda R\lambda^{-1}$) if and only if $\lambda R=R\lambda$. By a similar argument, $R\lambda\subseteq \lambda R$ if and only if $\lambda R=R\lambda$.
\end{proof}

As an immediate consequence of the above lemma is that if $R$ is a maximal subring of a division ring $D$, then $R$ is a left duo ring if and only if $R$ is a right duo (if and only if $R$ is a duo ring).

\begin{lem}\label{t2}
Let $D$ be a division ring and $R$ be a maximal subring of $D$. Then either there exists $\lambda \in D\setminus R$ such that $\lambda R\subseteq R\lambda$ ($R\lambda\subseteq \lambda R$) or $N_l(R)=N_r(R)=N(R)=U(R)\cup\{0\}$.
\end{lem}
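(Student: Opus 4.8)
The statement is essentially a dichotomy: either the normalizer $N(R)$ sticks out of $R$ (equivalently, by Lemma \ref{t1}, one of the one-sided normalizers sticks out of $R$), or all three normalizers collapse to $U(R)\cup\{0\}$. So the plan is to assume the second alternative fails and derive the first — or, more naturally, to assume there is $\lambda\in D\setminus R$ with $\lambda R\subseteq R\lambda$, and otherwise show $N_l(R)=N_r(R)=N(R)=U(R)\cup\{0\}$. First I would record that, by Lemma \ref{t1}, for any $\lambda\in D$ the three conditions $\lambda R\subseteq R\lambda$, $R\lambda\subseteq \lambda R$, and $\lambda R=R\lambda$ are equivalent; hence $N_l(R)=N_r(R)=N(R)$ unconditionally. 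This already reduces the claim to: either $N(R)\setminus R\neq\emptyset$, or $N(R)=U(R)\cup\{0\}$.

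Now suppose $N(R)\subseteq R$. I need to show $N(R)=U(R)\cup\{0\}$. One inclusion is easy: if $u\in U(R)$ then $uRu^{-1}$ and $u^{-1}Ru$ are subrings of $D$ containing... actually more carefully, $u\in R$ and $u^{-1}\in R$, so $uRu^{-1}\subseteq R$ and hence (by maximality of $uRu^{-1}$, as noted in the proof of Lemma \ref{t1}) $uRu^{-1}=R$, giving $u\in N(R)$; together with $0\in N(R)$ this yields $U(R)\cup\{0\}\subseteq N(R)$. For the reverse inclusion, take a nonzero $\lambda\in N(R)$; by hypothesis $\lambda\in R$. I must show $\lambda\in U(R)$, i.e. $\lambda^{-1}\in R$. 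Since $\lambda R=R\lambda$, conjugation by $\lambda$ is a ring automorphism of $R$, so in particular $\lambda R\lambda^{-1}=R$; thus $\lambda^{-1}R\lambda = R$ as well, i.e. $\lambda^{-1}\in N(R)$, and by hypothesis again $\lambda^{-1}\in R$. Hence $\lambda\in U(R)$. This closes the argument.

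The real content — and the step I expect to be the main obstacle — is making sure the ``either/or'' is not vacuous or misstated: namely that the negation of ``$\exists\lambda\in D\setminus R$ with $\lambda R\subseteq R\lambda$'' is exactly ``$N_l(R)\subseteq R$'', and symmetrically for $N_r$. If no $\lambda\in D\setminus R$ satisfies $\lambda R\subseteq R\lambda$, then every $\lambda\in D$ with $\lambda R\subseteq R\lambda$ lies in $R$, i.e. $N_l(R)\subseteq R$; and by the equivalences of Lemma \ref{t1} this forces $N_l(R)=N_r(R)=N(R)\subseteq R$, which is precisely the hypothesis handled in the previous paragraph. The parenthetical ``$R\lambda\subseteq\lambda R$'' in the statement is then just the Lemma \ref{t1} reformulation and needs no separate treatment. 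So the whole proof is short: invoke Lemma \ref{t1} to merge the one-sided normalizers, dispose of the trivial case where some $\lambda\notin R$ normalizes $R$, and in the remaining case run the two-line ``$\lambda$ and $\lambda^{-1}$ both in $N(R)\subseteq R$'' computation to pin $N(R)$ down to $U(R)\cup\{0\}$.
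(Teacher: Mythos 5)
Your proposal is correct and follows essentially the same route as the paper: use Lemma \ref{t1} to identify $N_l(R)=N_r(R)=N(R)$, note $U(R)\cup\{0\}\subseteq N(R)$, and in the case $N(R)\subseteq R$ observe that any nonzero $\lambda\in N(R)$ has $\lambda^{-1}\in N(R)\subseteq R$, hence $\lambda\in U(R)$. The extra detail you supply (maximality of $uRu^{-1}$ to get $U(R)\subseteq N(R)$) is the same justification the paper leaves implicit.
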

\begin{proof}
By the previous lemma, it is clear that $N:=N_l(R)=N_r(R)=N(R)$. Also it is obvious that $U(R)\cup\{0\}\subseteq N$. Now, we have two cases: $(1)$ $N\nsubseteq R$, which means there exists $\lambda\in D\setminus R$ such that $\lambda R\subseteq R\lambda$ and we are done; $(2)$ $N\subseteq R$, now for each $0\neq \lambda\in N$, clearly $\lambda^{-1}\in N$ and therefore $\lambda\in U(R)$. Hence we are done.
\end{proof}

We need the following remark for our next results.

\begin{rem}\label{t3}
Let $D$ be a division ring and $R$ be a subring of $D$. Assume that there exist $\lambda_i\in D$, $1\leq i\leq n$ such that $R\lambda_i=\lambda_i R$ for each $i$, and $D=R\lambda_1+R\lambda_2+\cdots+R\lambda_n$ (i.e., $D$ is a finite normalizing extension of $R$). Then $R$ is a division ring, see \cite[(i) of Corollary P. 369]{macrob}. In particular, if there exists $\lambda\in D\setminus R$, such that $R\lambda=\lambda R$ and $D=R+R\lambda+\cdots+R\lambda^n$ for some natural number $n$, then $R$ is a division ring.
\end{rem}

Now we are ready to present our first main result which shows that a non-division maximal subring of a division ring has properties similar to non-field maximal subrings of fields.

\begin{thm}\label{t4}
Let $D$ be a non-commutative division ring and $R$ be a maximal subring of $D$. Assume that there exists a nonzero non-unit $a$ in $R$, such $aR=Ra$. Then the following holds:
\begin{enumerate}
\item $X=\{1,a, a^2,\ldots\}$ is an Ore set and $D=X^{-1}R=RX^{-1}$.
\item each nonzero one sided ideal of $R$ contains a power of $a$. In particular, $u.dim({}_RR)=1$, $u.dim(R_R)=1$ and $R$ is an Ore domain which division ring of quotients is $D$.
\item each nonzero prime ideal of $R$ contains $a$.
\item $a\in J(R)$.
\item $R$ is a non-simple $G$-domain.
\item for each $\lambda\in N(R)$ either $\lambda\in R$ or $\lambda^{-1}\in R$.
\end{enumerate}
\end{thm}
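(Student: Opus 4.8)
The plan is to exploit the hypothesis $aR = Ra$ heavily, together with the maximality of $R$ and Remark~\ref{t3}. First I would establish (1). Since $aR = Ra$, for each $n$ the set $\{1, a, a^2, \ldots\}$ is multiplicatively closed and satisfies the Ore condition on both sides: given $r \in R$ and a power $a^n$, the equality $a^n R = R a^n$ produces $r' \in R$ with $a^n r = r' a^n$ and $s' \in R$ with $r a^n = a^n s'$. So $X = \{1, a, a^2, \ldots\}$ is a (left and right) Ore set in $R$, and the localizations $X^{-1}R$ and $RX^{-1}$ exist inside $D$ (as $a$ is a nonzero element of the division ring $D$, it is invertible there). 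Now $a^{-1} \notin R$: if it were, $a$ would be a unit of $R$, contrary to hypothesis. Hence $R[a^{-1}] = D$ by maximality of $R$, and since $a R = Ra$ one checks $R[a^{-1}] = \bigcup_n a^{-n} R = X^{-1}R$; symmetrically $R[a^{-1}] = RX^{-1}$. Thus $D = X^{-1}R = RX^{-1}$, which is (1).

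Next, (2): let $I$ be a nonzero left ideal of $R$ and pick $0 \neq x \in I$. By (1), $x^{-1} \in D = X^{-1}R$, so $x^{-1} = a^{-n} r$ for some $n \geq 0$ and $r \in R$; then $a^n = r x \in Rx \subseteq I$. So every nonzero left ideal contains a power of $a$; the same argument with $RX^{-1}$ handles right ideals. In particular two nonzero left ideals both contain powers of $a$, hence a common power of $a$, so their intersection is nonzero, giving $u.\dim({}_RR) = 1$; similarly $u.\dim(R_R) = 1$. A domain with uniform dimension one on each side is a left and right Ore domain, and its division ring of quotients is the localization at all nonzero elements, which by (1) is already $D$. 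This proves (2). Statement (3) is immediate: a nonzero prime ideal $P$ contains some $a^n$ by (2), and primeness forces $a \in P$.

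For (4), I would show $1 + aR \subseteq U(R)$, which gives $a \in J(R)$. Let $b \in R$; since $aR = Ra$ and $a$ is central-like enough for this purpose, $1 + ab$ (and $1 - ab$) is invertible in $D$; the point is to show its inverse lies in $R$. Here I would use that every nonzero one-sided ideal contains a power of $a$ (part (2)) together with $aR = Ra$: if $1 + ab$ were not a unit in $R$, the left ideal $R(1+ab)$, being nonzero, contains $a^n$, and one manipulates this relation modulo powers of $a$ to reach a contradiction — this is the step I expect to be the main obstacle, since it requires care with the noncommutativity and with the interplay between left and right multiples of $a$. Granting (4), statement (5) follows: $R$ is a domain by (2), it is not simple because $aR$ is a proper nonzero two-sided ideal (proper since $a$ is a non-unit, two-sided since $aR = Ra$), and it is a $G$-domain because its division ring of quotients $D$ equals $R[a^{-1}]$, a localization at a single element. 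Finally (6): if $\lambda \in N(R)$, then $\lambda R \lambda^{-1} = R$, so $\lambda R = R\lambda$; applying Lemma~\ref{t1} and then the maximality argument of Lemma~\ref{t2}, either $\lambda \in R$ or, if $\lambda \notin R$, then $\lambda^{-1} R = R \lambda^{-1}$ and by Remark~\ref{t3}-style reasoning — using that $R[\lambda^{-1}] = D$ is a normalizing extension — one gets $\lambda^{-1} \in R$. I would double-check that no circularity arises between (4) and (5) and that the normalizing-extension invocation in (6) genuinely applies.
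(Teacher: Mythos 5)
Your treatment of parts (1), (2), (3), (5) and (6) is essentially the paper's argument (the paper likewise gets $D=R[a^{-1}]=\{a^{-n}r\}=\{ra^{-n}\}$ from maximality and $a^nR=Ra^n$, inverts $x\in I$ to produce $a^n=rx\in I$, and for (6) uses left integrality of $\lambda$ plus Remark~\ref{t3} to contradict the existence of the non-unit $a$). Two small points there: in (3) you should say explicitly that $Ra=aR$ is a two-sided ideal with $(Ra)^n=Ra^n\subseteq P$, so primeness applies to ideals, not to the element $a^n$ itself (the paper spells this out via $aRa^{n-1}=Ra^n\subseteq P$ and induction); and in (6) the conclusion of the normalizing-extension argument is that $R$ would be a \emph{division ring}, which contradicts the hypothesis that $a$ is a nonzero non-unit --- that contradiction is what forces $\lambda^{-1}\in R$, and it is worth stating.

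The genuine gap is part (4), which you yourself flag as "the main obstacle" and do not close. The paper does not argue via quasi-regularity of $1+aR$ at all; it takes an arbitrary maximal left ideal $M$ (nonzero since $R$ is not a division ring), uses (2) to find the smallest $n$ with $a^n\in M$, and observes that if $a\notin M$ then $M+Ra=R$, whence
$$Ra^{n-1}=a^{n-1}R=a^{n-1}M+a^{n-1}Ra=a^{n-1}M+Ra^{n}\subseteq M,$$
contradicting minimality of $n$; hence $a$ lies in every maximal left ideal and $a\in J(R)$. Your proposed route can in fact be completed by the same trick: if $1+ab$ is not left invertible it lies in some maximal left ideal $M$; since $ab\in aR=Ra$ one gets $M+Ra=R$, the displayed computation forces $a\in M$ by downward induction, and then $1=(1+ab)-ab\in M$, a contradiction. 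But as written your proof of (4) is a description of an intended manipulation rather than an argument, so the statement $a\in J(R)$ (and with it the half of (5) you could alternatively have drawn from (3) and (4)) is not actually established in your proposal.
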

\begin{proof}
First note that by our assumption $a^{-1}\notin R$. We claim that $X=\{1,a,a^{2},\ldots\}$ is a left/right Ore set in $R$, $D=X^{-1}R=\{a^{-n}r\ |\ n\geq 0, r\in R\}$ and $D=RX^{-1}=\{ra^{-n}\ |\ n\geq 0, r\in R\}$. Since $R$ is a maximal subring of $D$ and $a^{-1}\notin R$ we deduce that $D=R[a^{-1}]$. Now note that clearly $a^nR=Ra^n$ for each integer $n$. Therefore we have $D=R+Ra^{-1}+Ra^{-2}+\cdots=R+a^{-1}R+a^{-2}R+\cdots$. These equalities immediately imply that $D=\{a^{-n}r\ |\ n\geq 0, r\in R\}=\{ra^{-n}\ |\ n\geq 0, r\in R\}$. Now for $(1)$, let $r\in R$ and $n\geq 0$, to show that $X$ is a right Ore set we must prove that $rX\cap a^nR$ is a nonempty set. Since $ra^n\in Ra^n=a^nR$, we conclude that there exists $r'\in R$ such that $ra^n=a^nr'\in rX\cap a^nR$. Thus $X$ is a right Ore set and similarly is a left Ore set too. Thus the first part of $(1)$ holds. The second part of $(1)$, is clear by the first part of the proof. For $(2)$, if $I$ is a nonzero left ideal of $R$ and $0\neq x\in I$, then $x^{-1}\in D$. Therefore there exists $n\geq 0$ and $r\in R$ such that $x^{-1}=a^{-n}r$ and thus $a^{n}=rx\in I$. This immediately implies that $u.dim({}_RR)=1$ and similarly $u.dim(R_R)=1$. Thus $R$ is an Ore domain which division ring of quotients is $D$. Thus $(2)$ holds. For $(3)$, let $P$ be a nonzero prime ideal of $R$, and $n$ be a natural number (by $(2)$) such that $a^n\in P$. Since $aRa^{n-1}=Ra^n\subseteq P$, by induction we conclude that $a\in P$, for $P$ is a prime ideal. Thus $(3)$ holds. To see $(4)$, let $M$ be an arbitrary maximal left ideal of $R$. The $M\neq 0$, for $R$ is not a division ring. Thus by $(2)$, there exists $n\geq 1$ such that $a^n\in M$. We may assume that $n$ is the smallest natural number with this property. If $a\notin M$, then $M+Ra=R$ and therefore $Ra^{n-1}=a^{n-1}R=a^{n-1}M+a^{n-1}Ra=a^{n-1}M+Ra^n\subseteq M$, which is absurd by the minimality of $n$. Thus $a\in J(R)$. Hence $(4)$ holds. $(5)$ is an immediate consequence of $(3)$ and $(4)$. Finally for $(6)$, assume that $\lambda\in N(R)$. We may assume that $\lambda\neq 0$. Assume that $\lambda\notin R$, we claim that $\lambda^{-1}\in R$. For otherwise $\lambda\in R[\lambda^{-1}]=D$ and therefore $\lambda$ is left integral over $R$ of degree $k$. Since $\lambda^nR=R\lambda^n$, we immediately conclude that $D=R+R\lambda+\cdots+R\lambda^{k-1}$. Therefore by Remark \ref{t3}, $R$ is a division ring which is absurd. Thus $\lambda^{-1}\in R$.
\end{proof}

Next we have the following main result for arbitrary maximal subrings of division rings.

\begin{thm}\label{t5}
Let $D$ be a non-commutative division ring and $R$ be a maximal subring of $D$. Then at least one of the following holds:
\begin{enumerate}
\item $N(R)=U(R)\cup\{0\}$.
\item $R$ is a division ring and $D$ is finite over $R$ as left and right vector space with a same degree.
\item $R$ is not division ring which satisfies in Theorem \ref{t4}.
\end{enumerate}
\end{thm}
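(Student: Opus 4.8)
The plan is to assume that (1) fails and derive (2) or (3), using Lemmas~\ref{t1} and~\ref{t2}, Remark~\ref{t3} and Theorem~\ref{t4}. Since $N(R)\neq U(R)\cup\{0\}$, Lemma~\ref{t2} supplies some $\lambda\in D\setminus R$ with $\lambda R\subseteq R\lambda$, and then Lemma~\ref{t1} upgrades this to $\lambda R=R\lambda$. Conjugating this relation by powers of $\lambda$ gives $R\lambda^{n}=\lambda^{n}R$ for every $n\in\mathbb{Z}$, in particular $\lambda^{-1}R=R\lambda^{-1}$. The argument now bifurcates according to whether $\lambda^{-1}$ lies in $R$.

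Suppose first that $\lambda^{-1}\in R$. Here I would take $a=\lambda^{-1}$: it is a nonzero element of $R$, and it is not a unit of $R$, since otherwise $\lambda=a^{-1}\in R$, contrary to $\lambda\notin R$. As $aR=\lambda^{-1}R=R\lambda^{-1}=Ra$, the subring $R$ satisfies the hypotheses of Theorem~\ref{t4}; in particular $R$ is not a division ring (it contains the nonzero non-unit $a$), so alternative (3) holds.

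Suppose instead that $\lambda^{-1}\notin R$. By maximality $R[\lambda^{-1}]=D$, and since $\lambda^{-1}R=R\lambda^{-1}$ this ring equals $\sum_{n\ge 0}R\lambda^{-n}$; hence $\lambda=\sum_{i=0}^{l}t_{i}\lambda^{-i}$ for some $l\ge 0$ and $t_{i}\in R$. Multiplying on the right by $\lambda^{l}$ gives $\lambda^{l+1}=\sum_{i=0}^{l}t_{i}\lambda^{l-i}\in M_{l}$, where $M_{l}:=R+R\lambda+\cdots+R\lambda^{l}$. Since $M_{l}$ is a left $R$-submodule, $\lambda^{l+1}\in M_{l}$, and $\lambda M_{l}=R\lambda+\cdots+R\lambda^{l+1}\subseteq M_{l}$ (using $\lambda R=R\lambda$), an easy induction gives $\lambda^{m}\in M_{l}$ for all $m\ge 0$, so $D=R[\lambda]=M_{l}=R+R\lambda+\cdots+R\lambda^{l}$. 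Remark~\ref{t3} now forces $R$ to be a division ring, and $[D:R]_{l}\le l+1<\infty$. To obtain $[D:R]_{l}=[D:R]_{r}$ I would use that $R\lambda^{i}=\lambda^{i}R$ for every $i$, so that the left $R$-module $M_{n}:=\sum_{i\le n}R\lambda^{i}$ coincides with the right $R$-module $\sum_{i\le n}\lambda^{i}R$; letting $d$ be least with $M_{d}=D$, the minimality of $d$ together with $R$ being a division ring shows that $1,\lambda,\dots,\lambda^{d}$ are left $R$-independent (a nontrivial left relation would, after normalizing its top coefficient and iterating $\lambda M_{j-1}\subseteq M_{j-1}$, give $D=M_{d-1}$, which is impossible) and, symmetrically, right $R$-independent; since they also span, they form both a left and a right $R$-basis of $D$, whence $[D:R]_{l}=d+1=[D:R]_{r}$. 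Thus (2) holds.

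I expect the equality of the one-sided degrees to be the main obstacle: the left and right dimensions of a division-ring extension may genuinely differ, so one cannot invoke a general principle and must use the concrete normalizing structure — it is precisely the symmetry $R\lambda^{i}=\lambda^{i}R$ of the filtration $(M_{n})$ that forces the two degrees to agree. A smaller but essential subtlety is that the implication ``$\lambda^{-1}\notin R\Rightarrow D$ is a finite normalizing extension of $R$'' must be carried out by clearing a power of $\lambda$ as above rather than by dividing through by a leading coefficient, since at that stage $R$ is not yet known to be a division ring.
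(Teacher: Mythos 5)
Your proposal is correct and follows essentially the same route as the paper: use Lemmas~\ref{t1} and~\ref{t2} to produce $\lambda\notin R$ with $\lambda R=R\lambda$, then split into the case where $\lambda^{-1}\in R$ (apply Theorem~\ref{t4} with $a=\lambda^{-1}$) and the case where $\lambda^{-1}\notin R$ (clear denominators to get a finite normalizing extension and invoke Remark~\ref{t3}). The only difference is cosmetic — the paper cases on whether $R$ is a division ring rather than on whether $\lambda^{-1}\in R$ — and your explicit verification that $[D:R]_l=[D:R]_r$ via the filtration $M_n$ supplies a detail the paper dismisses as ``not hard to see.''
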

\begin{proof}
Assume that $(1)$ does not hold. Thus by Lemma \ref{t2}, there exists $\lambda\in D\setminus R$ such that $\lambda R\subseteq R\lambda$. Now we have two cases, either $R$ is a division ring or not. First assume that $R$ is a division ring, hence we infer that $\lambda^{-1}\notin R$. Also note that by Lemma \ref{t1}, we conclude that $\lambda R=R\lambda$ and $\lambda^{-1}R=R\lambda^{-1}$. Since $R$ is a maximal subring of $D$ which does not contain $\lambda$ and $\lambda^{-1}$, we infer that $D=R[\lambda]$ and $D=R[\lambda^{-1}]$.  Thus $\lambda\in D=R[\lambda^{-1}]=R+R\lambda^{-1}+R\lambda^{-2}+\cdots$ and $\lambda^{-1}\in D=R[\lambda]=R+R\lambda+R\lambda^2+\cdots$. Therefore $\lambda$ and $\lambda^{-1}$ are left integral over $R$ (and it is not hard to see that $\lambda$ and $\lambda^{-1}$ have a same degree over $R$, say $k$). Thus $[D:R]_l=k$. Also note that since $R\lambda^n=\lambda^nR$ for each integer $n$, we immediately conclude that $[D:R]_r=k$, and therefore $(2)$ holds in this case. Now suppose that $R$ is not a division ring. Then by a similar proof of $(6)$ of Theorem \ref{t4}, we conclude that $a:=\lambda^{-1}\in R$ and therefore $(3)$ holds.
\end{proof}

In the next result we study the behavior of a maximal subring $R$ of a division ring $D$ respect to the center of $D$, i.e., $C_D(D)$, the center of $R$, i.e., $C_R(R)$ and the centralizer of $R$ in $D$, i.e., $C_D(R)$.

\begin{thm}\label{t6}
Let $D$ be a non-commutative division ring with $C_D(D)=F$ and $R$ be a maximal subring of $D$. Then $R$ satisfies in one of the following conditions:
\begin{enumerate}
\vspace{5mm}
\item $F\nsubseteq R$. In this case $R$ is a non-commutative ring, $C_D(R)=F$ and $C_R(R)=R\cap F$. Moreover $R$ satisfies in exactly one of the following:
\vspace{5mm}

  \begin{itemize}
  \item[$({\bf 1-CND})$] $R$ is a division ring and $[D:R]_l=[D:R]_r$ is finite. For each $X\subseteq D$, $R\neq C_D(X)$.
  \item[$({\bf 1-CNO})$] $R$ is an Ore $G$-domain with division ring of quotients $D$ that satisfies in Theorem \ref{t4}.
  \end{itemize}

\vspace{5mm}

\item $F\subseteq R$. In this case $C_R(R)=C_D(R)$ is a field and one of the following holds:
\vspace{5mm}

  \begin{itemize}
  \item[$({\bf 2-CE})$] $C_R(R)=C_D(R)=F$. In this case either $N(R)=U(R)\cup\{0\}$ or one of the following holds:
\vspace{5mm}

     \begin{itemize}
     \item[$({\bf 2-CED})$] $R$ is a division ring. $[D:R]_l=[D:R]_r$ is finite. For each $X\subseteq D$, $R\neq C_D(X)$.
     \item[$({\bf 2-CEO})$] $R$ is an Ore $G$-domain with division ring of quotients $D$ that satisfies in Theorem \ref{t4}..
     \end{itemize}
\vspace{5mm}

  \item[$({\bf 2-CN})$] $F\subsetneq C_R(R)=C_D(R)$. In this case if $\beta\in C_R(R)\setminus F$, then $R=C_D(\beta)$ is a division ring and exactly one of the following holds:
\vspace{5mm}

     \begin{itemize}
     \item[$({\bf 2-CNF})$] $[D:R]_l$ is finite. In fact, $[D:R]_l$ is finite if and only if $\beta$ is algebraic over $F$. In this case, $[D:R]_l=[D:R]_r=[F[\beta]:F]$ and $C_R(R)=F[\beta]$.
     \item[$({\bf 2-CNI})$] $[D:R]_l$ and $[D:R]_r$ are infinite and $N(R)=R$. Moreover, in this case $C_R(R)/F$ is a purely transcendental extension of fields.
     \end{itemize}

  \end{itemize}

\end{enumerate}

\end{thm}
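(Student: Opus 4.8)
The plan is to run a case analysis governed by the position of $F$ relative to $R$, in each branch reducing to Theorems \ref{t4} and \ref{t5} together with a few standard facts about centralizers in division rings. \emph{Case $F\nsubseteq R$.} Pick $\alpha\in F\setminus R$; by maximality $R[\alpha]=D$. Since $\alpha$ is central, any element centralizing $R$ also centralizes $\alpha$, hence centralizes $R[\alpha]=D$, so $C_D(R)=C_D(D)=F$ and $C_R(R)=R\cap F$. If $R$ were commutative, then $R\subseteq C_D(R)=F$ and $D=R[\alpha]\subseteq F$, which is impossible; hence $R$ is non-commutative. Moreover $\alpha R=R\alpha$, so $\alpha\in N(R)$ while $\alpha\notin U(R)\cup\{0\}\subseteq R$; thus alternative $(1)$ of Theorem \ref{t5} fails, leaving exactly two possibilities: $R$ is a division ring with $[D:R]_l=[D:R]_r$ finite, or $R$ is not a division ring and satisfies Theorem \ref{t4} (hence, by parts $(2)$ and $(5)$ there, is an Ore $G$-domain with division ring of quotients $D$). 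These are the mutually exclusive alternatives $({\bf 1-CND})$ and $({\bf 1-CNO})$. Finally, for every $X\subseteq D$ one has $F\subseteq C_D(X)$ but $F\nsubseteq R$, so $R\neq C_D(X)$, which is the last assertion of $({\bf 1-CND})$.

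\emph{Case $F\subseteq R$.} If $x\in C_D(R)\setminus R$, then $R[x]=D$ and $x$ centralizes both $R$ and $x$, so $x\in C_D(D)=F\subseteq R$, a contradiction; hence $C_D(R)\subseteq R$, so $C_D(R)=C_R(R)$, and as this is a commutative subring of $D$ closed under inverses, it is a field. If $C_R(R)=C_D(R)=F$ we are in $({\bf 2-CE})$: Theorem \ref{t5} gives either $N(R)=U(R)\cup\{0\}$, or one of the mutually exclusive alternatives $({\bf 2-CED})$, $({\bf 2-CEO})$ (handled as in the previous case); and in $({\bf 2-CED})$, if $R=C_D(X)$ then $X\subseteq C_D(C_D(X))=C_D(R)=F$, so $C_D(X)=D$, contradicting properness — thus $R\neq C_D(X)$.

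\emph{Case $({\bf 2-CN})$: $F\subsetneq C_R(R)=C_D(R)$.} For any $\beta\in C_R(R)\setminus F$ we have $R\subseteq C_D(\beta)\subsetneq D$ (proper because $\beta\notin C_D(D)=F$), so $R=C_D(\beta)$ by maximality, and $C_D(\beta)$ is a division ring. For the dichotomy $({\bf 2-CNF})$/$({\bf 2-CNI})$ I would invoke the classical facts that, for $a$ in a division ring with center $F$, $[D:C_D(a)]_l$ is finite if and only if $a$ is algebraic over $F$, and then $[D:C_D(a)]_l=[D:C_D(a)]_r=[F(a):F]$ and $C_D(C_D(a))=F(a)$. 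Applied to $\beta$: if $\beta$ is algebraic over $F$ then $F[\beta]=F(\beta)$, $[D:R]_l=[D:R]_r=[F[\beta]:F]$, and $C_R(R)=C_D(C_D(\beta))=F[\beta]$ — this is $({\bf 2-CNF})$; and finiteness of $[D:R]_l$ forces $\beta$ algebraic, so the two conditions are equivalent, and since then $C_R(R)=F[\beta]$ is finite over $F$ this alternative holds simultaneously for every choice of $\beta$. If instead $\beta$ is transcendental over $F$, then $[D:R]_r=\infty$ too (else the right-hand version would force $\beta$ algebraic); alternative $(2)$ of Theorem \ref{t5} fails and $(3)$ is excluded since $R$ is a division ring, so $(1)$ holds: $N(R)=U(R)\cup\{0\}=R$. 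For the last assertion of $({\bf 2-CNI})$, note that the above applies to every $\gamma\in C_R(R)\setminus F$, giving $R=C_D(\gamma)$ and hence (as $[D:R]_l=\infty$) $\gamma$ transcendental over $F$; thus $F$ is relatively algebraically closed in $C_R(R)$. I would then try to upgrade this to ``$C_R(R)/F$ is purely transcendental'', exploiting the maximality of $R=C_D(\beta)$ — ideally by proving $C_R(R)=F(\beta)$, a rational function field.

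The step I expect to be the main obstacle is exactly this last one: passing from ``$F$ is relatively algebraically closed in $C_R(R)$'' to ``$C_R(R)/F$ is purely transcendental'', i.e.\ excluding, via a maximality argument, any $\gamma\in C_R(R)$ algebraic of degree $>1$ over $F(\beta)$. The remaining ingredients are routine: the centralizer-dimension facts are classical and should be cited or re-proved (with some care when $[D:F]$ is infinite), the identity $C_D(C_D(\beta))=F(\beta)$ in the algebraic case is the double centralizer theorem in the form valid when $[D:C_D(\beta)]<\infty$, and the rest is bookkeeping with Theorems \ref{t4} and \ref{t5}, together with the remark that within each branch the listed alternatives are distinguished by whether $R$ is a division ring and by whether $[D:R]_l$ is finite.
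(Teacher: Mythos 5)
Your proposal follows essentially the same route as the paper: identify $C_D(R)$ in each case, use the centrality of an element of $F\setminus R$ (resp.\ the maximality of $R$ inside $C_D(x)$) to pin down $C_D(R)$, and then feed the result into Theorems \ref{t4} and \ref{t5} and the double centralizer theorem. Two of your micro-arguments are actually cleaner than the paper's: for $({\bf 1-CND})$ you observe directly that $F\subseteq C_D(X)$ while $F\nsubseteq R$, where the paper argues via an element $x\in X\setminus F$; and for $({\bf 2-CNI})$ you deduce $N(R)=R$ formally from the trichotomy of Theorem \ref{t5} rather than rerunning its proof. These are equivalent in substance.

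The obstacle you single out at the end is genuine, and you should know that the paper does not resolve it either: its entire justification for the last sentence of $({\bf 2-CNI})$ is ``It is clear that in this case $E/F$ is a purely transcendental extension of fields.'' What the argument actually yields --- yours and the paper's alike --- is that every $\gamma\in C_R(R)\setminus F$ satisfies $R=C_D(\gamma)$ with $[D:R]_l$ infinite, hence is transcendental over $F$; that is, $F$ is relatively algebraically closed in $C_R(R)$. In the standard sense of ``purely transcendental'' (generated by an algebraically independent set) this is strictly weaker: the function field of a positive-genus curve over an algebraically closed field $F$ has $F$ relatively algebraically closed in it without being purely transcendental, and nothing in the maximality of $R$ obviously rules such a field out as $C_R(R)$. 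So your instinct not to claim this step is sound; to make it honest one must either prove $C_R(R)=F(\beta)$ (which would need an input like \cite[Lemma 14.3]{hfad}, available only in special settings such as existentially complete division rings), or weaken the conclusion to ``$F$ is relatively algebraically closed in $C_R(R)$,'' which is all that is actually established. Apart from this shared gap, your write-up proves everything the paper proves.
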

\begin{proof}
First assume that $(1)$ $F\nsubseteq R$. Let $\alpha\in F\setminus R$. Thus $D=R[\alpha]$, for $R$ is a maximal subring of $D$. Since $\alpha\in F=C_D(D)$, we conclude that $D=R+R\alpha+R\alpha^2+\cdots$. We claim that $C_D(R)=F$. Let $\beta\in C_D(R)$ and $x\in D$. Thus there exist a natural number $n$ and $r_i\in R$, $1\leq i\leq n$, such that $x=r_0+r_1\alpha+\cdots+r_n\alpha^n$. Since $\alpha\in F=C_D(D)$ and $\beta\in C_D(R)$ (therefore $r_i\beta=\beta r_i$), we immediately conclude that $x\beta=\beta x$ and thus $\beta\in C_D(D)=F$. Hence $C_D(R)=F$, which immediately shows that $C_R(R)=F\cap R$. It is clear that $R$ is non-commutative in this case. By Theorem \ref{t4}, it is obvious that ${\bf 1-CND}$ or ${\bf 1-CNO}$ holds. Also note that in case ${\bf 1-CND}$, if $R=C_D(X)$ for some $X\subseteq D$, then since $R$ is a proper subring of $D$, we deduce that there exists $x\in X\setminus F$ and therefore $R=C_D(x)$, by maximality of $R$. Hence $x\in C_D(R)=F$ which is absurd.\\

Now assume that $(2)$ $F\subseteq R$. First we show that $C_R(R)=C_D(R)$. Let $x\in C_D(R)\setminus R$. Thus $x\notin F=C_D(D)$, for $F\subseteq R$. Therefore $C_D(x)$ is a proper subring of $D$ which contains $R$. Hence by maximality of $R$, we infer that $R=C_D(x)$ and therefore $x\in R$, which is absurd. Thus $C_D(R)\subseteq R$ which immediately implies that $C_R(R)=C_D(R)$. From the latter equality we conclude that $C_R(R)$ is a field (note that $C_D(R)$ is a division ring and $C_R(R)$ is the center of $R$). Now we have two cases:
\begin{itemize}
\item[$({\bf 2-CE})$] $F=C_R(R)=C_D(R)$. Hence we are done by Theorem \ref{t5}, we deduce that either $N(R)=U(R)\cup\{0\}$ or $({\bf 2-CED})$ and or $({\bf 2-CEO})$ holds.
\item[$(\bf 2-CN)$] Now assume that $C_D(D)=F\subsetneq E=C_R(R)=C_D(R)$. If $\beta\in E\setminus F$, then by maximality of $R$ we immediately conclude that $R=C_D(\beta)$ and therefore $R$ is a division ring in this case. Hence we have one of the following subcases:
    \begin{itemize}
    \item[$({\bf 2-CNF})$] $D$ is finite dimensional as left/right over $R$ which occur if and only if $\beta$ is algebraic over $F$. Moreover in this case $[D:R]_l=[D:R]_r=[F[\beta]:F]$ and $E=F[\beta]$. To see this note that by Double Centralizer Theorem, see \cite[Theorem 15.4]{lam} or \cite[Corollary 3.3.9]{cohndr}, since $R=C_D(\beta)=C_D(F[\beta])$, we have $[D:R]_l=[F[\beta]:F]$ whenever each side is finite and in this case $E=C_D(R)=F[\beta]$. Similarly, $[D:R]_r=[F[\beta]:F]$ and vice versa, hence we are done.

    \item[$({\bf 2-CNI})$] Thus we may assume that $D$ is not finite over $R$ as left/right vector space. In this case $N(R)=R$. For otherwise if $\lambda\in N(R)\setminus R$, then $\lambda^{-1}\notin R$, for $R$ is a division ring. Hence by a similar proof of Theorem \ref{t5}, we conclude that $D$ is finite over $R$ which is absurd. It is clear that in this case $E/F$ is a purely transcendental extension of fields.
    \end{itemize}
\end{itemize}
\end{proof}

In the next result we prove that the contraction of a certain maximal subring of a division ring $D$ to the center of $D$, remains a maximal subring of the center of $D$.

\begin{thm}\label{t7}
Let $D$ be a non-commutative division ring with $C_D(D)=F$ and $R$ be a maximal subring of $D$ which does not contain $F$. Then $C_R(R)=R\cap F$ is a maximal subring of $F$. Moreover, one of the following holds:
\begin{enumerate}
	\item If $R$ is not a division ring, then $R\cap F$ is a one-dimensional valuation ring (which is a $G$-domain with quotient field $F$).
	\item If $R$ is a division ring, then $F\cap R\subset F$ is a finite minimal ring extension of fields (of minimum degree).
\end{enumerate}
\end{thm}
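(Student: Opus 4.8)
The plan is to set $S:=R\cap F$; by Theorem \ref{t6} we are in case (1) (since $F\not\subseteq R$), so $C_D(R)=F$ and $C_R(R)=S$, and the whole task reduces to showing that $S$ is a maximal subring of $F$, split according to whether $R$ is a division ring (case $({\bf 1-CND})$, which should give conclusion (2)) or not (case $({\bf 1-CNO})$, conclusion (1)). For maximality the common strategy I would use is: given a subring $T$ with $S\subsetneq T\subseteq F$, note $T\not\subseteq R$ (else $T\subseteq R\cap F=S$), so there is $\alpha\in T\setminus R\subseteq F\setminus R$, whence $R[\alpha]=D$ by maximality and therefore the subring $RT=\sum_{t\in T}Rt$ (using that $F$ is central, products collapse to this form) equals $D$; then one feeds this back through the centrality of $F$ to force $F\subseteq T$, hence $T=F$.

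In the division-ring case I would first observe that $S$ is a field: a nonzero $s\in S$ is a unit of the division ring $R$, and $s^{-1}\in F$, so $s^{-1}\in R\cap F=S$. Since $[D:R]_l=k$ is finite by Theorem \ref{t6}, from $D=\sum_{t\in T}Rt$ finitely many of the $t$'s already span $D$ as a left $R$-module, so I can extract a left $R$-basis $t_1,\dots,t_k\in T$ with $D=Rt_1\oplus\cdots\oplus Rt_k$. For $x\in F$ write $x=\sum_i r_it_i$ uniquely ($r_i\in R$); comparing $rx$ with $xr$ for $r\in R$ (the $t_i$ being central) and invoking uniqueness gives $rr_i=r_ir$ for all $r$, i.e. $r_i\in C_R(R)=S$. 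Hence $F=\sum_i St_i\subseteq T$, so $T=F$; taking $T=F$ itself also shows $[F:S]\le k<\infty$. Thus $S\subset F$ is a finite minimal ring extension of fields (of minimum degree), which is (2).

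In the non-division case Theorem \ref{t6} tells us the conclusions of Theorem \ref{t4} hold for $R$; by Theorem \ref{t4}(6) together with $F\subseteq N(R)$, every nonzero $\alpha\in F$ satisfies $\alpha\in S$ or $\alpha^{-1}\in S$, so $S$ is a valuation ring with fraction field $F$. For maximality, with $\alpha\in T\setminus S$ as above, $a:=\alpha^{-1}$ is a central nonzero non-unit of $R$ (it is a non-unit, else $\alpha\in R\cap F=S$), so Theorem \ref{t4}(1) applied to this $a$ gives $D=\bigcup_{n\ge 0}R\alpha^{n}$; since $x=r\alpha^{n}\in F$ with $r\in R$ forces $r=x\alpha^{-n}\in F\cap R=S$, we get $F\cap R\alpha^{n}=S\alpha^{n}$ and hence $F=\bigcup_n S\alpha^{n}=S[\alpha]\subseteq T$, so $T=F$. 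Therefore $S$ is a maximal subring of $F$ which is a valuation ring, hence a one-dimensional valuation ring (and a $G$-domain with quotient field $F$) by the description of maximal subrings of fields recalled in the introduction; this is (1).

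The step I expect to be the main obstacle is the coefficient-pinning in the division-ring case: passing from $x=\sum r_it_i\in F$ to $r_i\in S$ crucially uses the \emph{uniqueness} of the representation, i.e. that $t_1,\dots,t_k$ is a genuine left $R$-basis and $[D:R]_l<\infty$ — which is exactly why the non-division case cannot use this argument and must instead be routed through Theorem \ref{t4}. Minor points to handle carefully are: justifying that finitely many of the (possibly infinite) generating family $\{t\}_{t\in T}$ already span the finite-dimensional left $R$-space $D$ and that a basis can be picked among them; and, in the non-division case, that $\bigcup_n S\alpha^{n}$ is an increasing union equal to $S[\alpha]$ (which uses $\alpha^{-1}\in S$).
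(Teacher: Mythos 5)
Your proposal is correct and follows essentially the same route as the paper: it splits on whether $R$ is a division ring, uses Theorem \ref{t4}(1),(6) to write each $x\in F$ as $r\alpha^{n}$ with $r\in R\cap F$ in the non-division case, and in the division case pins the coordinates of a central element into $C_R(R)=R\cap F$ via uniqueness of the expansion over a central left $R$-basis. The only (cosmetic) difference is that the paper works with the specific basis $1,\alpha,\ldots,\alpha^{n-1}$ coming from the integrality of $\alpha$ over $R$ and verifies $(F\cap R)[\alpha]=F$ directly, while you extract an arbitrary basis from the intermediate ring $T$.
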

\begin{proof}
We must show that for each $\alpha\in F\setminus R$, we have $(F\cap R)[\alpha]=F$. We have two cases: either $R$ is a division ring or not. First assume that $R$ is not a division ring. It is clear that $\alpha\in N(R)$, and therefore by $(6)$ of Theorem \ref{t4}, $\beta:=\alpha^{-1}\in R$. Thus $\beta\in F\cap R$ and $D=\{r\beta^{-n}\ |\ r\in R, n\geq 0\}$, by $(1)$ of Theorem \ref{t4}. Now, let $x\in F$, thus there exist $r\in R$ and $n\geq 0$ such that $x=r\beta^{-n}$. Hence $r=x\beta^n\in F\cap R$ (note, $\beta=\alpha^{-1}\in F$) and therefore $x=r\beta^{-n}=r\alpha^n\in (F\cap R)[\alpha]$. Thus $(F\cap R)[\alpha]=F$ and therefore $F\cap R$ is a maximal subring of $F$. Since $\beta\in F\cap R$ and $\beta^{-1}=\alpha\notin F\cap R$, we deduce that $F\cap R$ is not a field and therefore $F\cap R$ is a one-dimensional valuation $G$-domain with quotient field $F$ (note that a non-field maximal subring of a field is a one-dimensional valuation domain and therefore is a $G$-domain).\\
Now assume that $R$ is a division ring. Since $R$ is a division ring we conclude that $\alpha^{-1}\notin R$. By a similar proof of $(6)$ of Theorem \ref{t4}, $\alpha$ is integral over $R$ of degree $n=[D:R]_l$, and we have $D=R+R\alpha+\cdots+R\alpha^{n-1}$. Let $x\in F$, thus there exist $r_0,\ldots,r_{n-1}\in R$ such that $x=r_0+r_1\alpha+\cdots+r_{n-1}\alpha^{n-1}$. Now we claim that for each $i$, $r_i\in F=C_D(R)$ (note that as we see in Case $(1)$ of the previous theorem $C_D(R)=F$). Let $z\in R$, then clearly $xz=zx$, now since $\alpha$ is integral over $R$ of degree $n$, we immediately deduce that for each $i$, $r_iz=zr_i$. Hence $r_i\in C_D(R)=F$, i.e., $r_i\in F\cap R$. Therefore $x=r_0+r_1\alpha+\cdots+r_{n-1}\alpha^{n-1}\in (F\cap R)[\alpha]$. Thus $F\cap R$ is a maximal subring of $F$. It is clear that $F\cap R$ is a field and therefore $F\cap R\subset F$ is a minimal ring extension of fields.
\end{proof}

Now we have the following immediate result.

\begin{cor}\label{t8}
Let $F$ be a field without maximal subring and $D$ be a non-commutative division ring with $C(D)=F$. If $R$ is a maximal subring of $D$, then $F\subseteq R$.
\end{cor}

The part $(1)$ of the following remark is about the extension of an automorphism of a maximal subring of a division ring $D$ to an inner automorphism of $D$ similar to the Skolem-Noether Theorem, see \cite[Corollary 3.3.6]{cohndr}.

\begin{rem}\label{t9}
\begin{enumerate}
\item Let $R$ be a maximal subring of a division ring $D$ and $N(R)=U(R)\cup\{0\}$. Then an automorphism $\sigma$ of $R$ can extend to an inner automorphism of $D$ if and only if $\sigma$ is an inner automorphism of $R$. To see this, assume that $\bar{\sigma}$ be the extension of $\sigma$ to an inner automorphism of $D$ and $\lambda\in D^*$ be such that $\bar{\sigma}(x)=\lambda x\lambda^{-1}$, for each $x\in D$. Then for each $x\in R$, we have $\lambda x\lambda^{-1}\in R$, i.e., $\lambda R\lambda^{-1}\subseteq R$. Therefore $\lambda\in N(R)=U(R)\cup\{0\}$ and hence we are done. In particular, if $D$ is an existentially complete division ring and $R$ is a finitely generated subdivision of $D$ which is a maximal subring of $D$, then $N(R)=R$ implies that each automorphism of $R$ is an inner automorphism, by the first part and \cite[Lemma 14.2]{hfad}; (it is interesting to note that if $D$ is a division ring with center $F$ and $R$ is a maximal subring of $D$ which is a division ring, $F\subseteq R$, $N(R)=R$ and $[R:F]$ is finite, then each automorphism of $R$ is inner by Skolem-Noether Theorem, see \cite[Corollary 3.3.6]{cohndr}).
\item Let $D$ be a division ring and $R$ be a maximal subring of $D$ of the form $R=C_D(x)$. Then for each $g\in D^*$, $C_D(gxg^{-1})$ is a maximal subring of $D$. To see this first note that as we mentioned in the proof of Lemma \ref{t1}, $gRg^{-1}$ is a maximal subring of $D$. It is not hard to see that $C_D(gxg^{-1})=gC_D(x)g^{-1}$ and therefore we are done.
\item Let $D$ be a division ring with a finite maximal subring, then $D$ is a finite field. This fact proved by \cite{klein, laffey} for non-commutative rings. Also note that as an immediate consequence of Theorem \ref{t5} is that if $R$ is a finite maximal subring of a division ring $D$, then either $N(R)=R$ or $D$ is finite field.
\end{enumerate}
\end{rem}

Finally in this section we prove some corollaries about the existence of maximal subrings in division ring.

\begin{cor}\label{t10}
Let $D$ be a division ring with center $F$. Then either $D$ has a maximal subring or each $\alpha\in D\setminus F$ is not algebraic over $F$.
\end{cor}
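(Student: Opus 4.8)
The plan is to establish the equivalent statement: if some $\alpha\in D\setminus F$ is algebraic over $F$, then $D$ has a maximal subring. Fix such an $\alpha$; then $D$ is non-commutative, and since $F$ is central the subring $K:=F[\alpha]$ is a commutative finite-dimensional domain over the field $F$, hence a subfield of $D$ with $2\le[K:F]<\infty$. The idea is first to exhibit a proper division subring of $D$ of finite left codimension, and then to run Zorn's Lemma inside the family of subrings lying above it.

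For the first step I would take $R_0:=C_D(\alpha)=C_D(K)$. Centralizers of subsets of a division ring are division subrings, so $R_0$ is a division subring of $D$; it is proper because $\alpha\notin C_D(D)=F$. The key input is that $R_0$ has finite codimension: by the Double Centralizer Theorem, in the form already invoked in the proof of Theorem~\ref{t6} (see \cite[Theorem 15.4]{lam}, \cite[Corollary 3.3.9]{cohndr}), one has $[D:R_0]_l=[D:R_0]_r=[K:F]<\infty$. Thus $D$ is a finite-dimensional left vector space over the division ring $R_0$.

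Now let $\Sigma$ be the set of proper subrings of $D$ containing $R_0$, ordered by inclusion; it is nonempty as $R_0\in\Sigma$. Each $S\in\Sigma$ satisfies $R_0 S\subseteq S$, so $S$ is a left $R_0$-subspace of $D$, of dimension at most $[D:R_0]_l$. Hence a chain in $\Sigma$ is a chain of subspaces of a finite-dimensional space, so it has a largest member (the one of maximal dimension, which then contains all the others), and that member again lies in $\Sigma$. Zorn's Lemma then yields a maximal element $R$ of $\Sigma$, and $R$ is a maximal subring of $D$: any subring $S$ with $R\subseteq S\subsetneq D$ contains $R_0$, hence belongs to $\Sigma$, hence equals $R$. (When $D=F$ the assertion is vacuous, and when $[D:F]$ is finite one could equally well run this argument starting from $F$ itself; working over $R_0=C_D(\alpha)$ treats both cases at once.)

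The substantive point — and the step I expect to be the main obstacle — is the codimension bound $[D:C_D(\alpha)]_l<\infty$. One must use the version of the Double Centralizer Theorem valid for an arbitrary division ring $D$ together with a subfield $K\supseteq F$ of finite degree (not merely the finite-dimensional central simple algebra version), so that no hidden hypothesis $\dim_F D<\infty$ creeps in; this is precisely the fact already used for case $({\bf 2-CNF})$ of Theorem~\ref{t6}. Once that is granted, the remaining lattice-theoretic argument via Zorn's Lemma is routine.
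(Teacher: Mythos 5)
Your argument is correct and follows essentially the same route as the paper: fix $\alpha\in D\setminus F$ algebraic over $F$, apply the Double Centralizer Theorem to get $[D:C_D(\alpha)]_l=[F[\alpha]:F]<\infty$, and conclude that a maximal subring containing $C_D(\alpha)$ exists because $D$ has finite left dimension over this proper division subring. The paper leaves the final Zorn/chain-stabilization step implicit, which you have simply spelled out.
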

\begin{proof}
Let $\beta\in D\setminus F$ is algebraic over $F$. Hence $[F[\beta]:F]$ is finite and therefore by Double Centralizer Theorem, see \cite[Theorem 15.4]{lam} or \cite[Corollary 3.3.9]{cohndr}, $[D:C_D(\beta)]_l$ is finite too. Now since $C_D(\beta)$ is a proper subring of $D$ and $D$ is finite over it, we conclude that $D$ has a maximal subring (which contains $C_D(\beta)$, say $R$, and $[D:R]_l$ is finite too. Moreover $[D:R]_l=[D:R]_r$).
\end{proof}

\begin{cor}
	Let $D$ be a division ring with center $F$ which is algebraic over its center. Then $D$ has no maximal subrings if and only if $D=F$ is a field without maximal subrings.
\end{cor}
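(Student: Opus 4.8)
The plan is to deduce the corollary from Corollary~\ref{t10} together with the observation that a division ring algebraic over its center $F$ is either commutative or possesses a noncentral element algebraic over $F$ (which is automatic, since \emph{every} element is algebraic over $F$ by hypothesis). The content of the statement is therefore a clean case split: if $D=F$ we invoke the known theory of maximal subrings of fields, and if $D\neq F$ we show $D$ must have a maximal subring, so the ``no maximal subrings'' hypothesis forces $D=F$.

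First I would prove the ``only if'' direction. Suppose $D$ has no maximal subrings. If $D$ were non-commutative, pick any $\alpha\in D\setminus F$; by hypothesis $\alpha$ is algebraic over $F$, so Corollary~\ref{t10} (equivalently, the Double Centralizer argument: $[F[\alpha]:F]<\infty$ forces $[D:C_D(\alpha)]_l<\infty$, and $C_D(\alpha)\subsetneq D$ is then contained in a maximal subring) shows $D$ has a maximal subring, a contradiction. Hence $D=F$ is a field, and it is a field with no maximal subrings. The ``if'' direction is immediate: if $D=F$ is a field without maximal subrings, then trivially $D$ has no maximal subrings.

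One small point worth addressing carefully is the step ``$C_D(\alpha)\subsetneq D$ implies $D$ has a maximal subring.'' This is exactly the mechanism used inside the proof of Corollary~\ref{t10}: a ring which is a finite (say left) vector space over a proper subring that is a division ring has a maximal subring containing that subring, obtained by a Zorn's lemma argument on the proper subrings lying between $C_D(\alpha)$ and $D$, which form a finite lattice (or at least a set closed under unions of chains, each of bounded dimension). Since we are permitted to cite Corollary~\ref{t10} directly, I would simply write: ``If $D$ is non-commutative and algebraic over $F$, then $D\setminus F\neq\varnothing$ and every such element is algebraic over $F$, so by Corollary~\ref{t10} $D$ has a maximal subring.''

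The main (and only) obstacle is purely expository: making sure the logical equivalence is stated in the right direction and that the degenerate case $D=F$ is handled honestly — there is no deep mathematics beyond Corollary~\ref{t10}, which itself rests on the Double Centralizer Theorem already cited in the excerpt. I expect the final proof to be three or four sentences long.
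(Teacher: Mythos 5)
Your proof is correct and follows exactly the route the paper intends: the corollary is stated as an immediate consequence of Corollary~\ref{t10}, since a non-commutative $D$ algebraic over $F$ has a noncentral element algebraic over $F$, forcing a maximal subring to exist, while the commutative case reduces to the known theory for fields. No gaps; your extra remarks on the Zorn's lemma mechanism are already contained in the proof of Corollary~\ref{t10} and need not be repeated.
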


\begin{cor}\label{t11}
Let $D$ be a non-commutative division ring with center $F$. Then either $D$ has a maximal subring or $dim_F(D)\geq |F|$.
\end{cor}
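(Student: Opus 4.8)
The plan is to argue by contraposition: assume that $D$ has no maximal subring and deduce $dim_F(D)\geq |F|$. The starting point is Corollary \ref{t10}: under this assumption every element of $D\setminus F$ must be transcendental over $F$. Since $D$ is non-commutative we have $F\subsetneq D$, so we may fix some $\alpha\in D\setminus F$; it is transcendental over $F$, and because $F=C_D(D)$ the element $\alpha$ commutes with $F$, so $F[\alpha]$ is a \emph{commutative} subring of $D$ which, by transcendence, is isomorphic as an $F$-algebra to the polynomial ring $F[x]$.

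The heart of the argument is to exhibit $|F|$ many $F$-linearly independent elements of $D$. The natural candidates are the ``partial fractions'' $(\alpha-c)^{-1}$ for $c\in F$; these lie in $D$ because $D$ is a division ring and $\alpha-c\neq 0$ for every $c\in F$ (as $\alpha\notin F$). For independence, suppose $\sum_{i=1}^{n}\lambda_i(\alpha-c_i)^{-1}=0$ with $\lambda_i\in F$ and $c_1,\dots,c_n\in F$ pairwise distinct. Multiplying by $\prod_{j=1}^{n}(\alpha-c_j)$ is legitimate since all the elements involved lie in the commutative ring $F[\alpha]$, and it yields $\sum_{i}\lambda_i\prod_{j\neq i}(\alpha-c_j)=0$ inside $F[\alpha]\cong F[x]$. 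Transcendence of $\alpha$ forces the corresponding polynomial to vanish identically; evaluating at $x=c_i$ and using that $F$ is a field with the $c_i$ distinct gives $\lambda_i=0$ for each $i$. Hence $\{(\alpha-c)^{-1}\mid c\in F\}$ is $F$-independent, and since $c\mapsto(\alpha-c)^{-1}$ is injective this set has cardinality $|F|$, so $dim_F(D)\geq |F|$.

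I do not expect a serious obstacle: once Corollary \ref{t10} supplies a transcendental element, the entire argument takes place inside the commutative field $F(\alpha)\cong F(x)$ contained in $D$, and the only real computation is the classical independence of $\{1/(x-c)\}_{c\in F}$ over $F$ in $F(x)$. The one point worth stating carefully is why clearing denominators is valid in the a priori non-commutative ring $D$ --- namely that $\alpha$, the scalars $c_j$, and the coefficients $\lambda_i$ all lie in the commutative subring $F[\alpha]$, so the manipulation is genuinely happening in $F[x]$. It is also worth remarking that no case split on the cardinality of $F$ is needed: the construction produces $|F|$ independent elements uniformly, and when $F$ is finite the conclusion is in any case immediate, since a single transcendental $\alpha$ already gives $dim_F(D)\geq\aleph_0\geq |F|$.
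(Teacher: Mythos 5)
Your proof is correct and follows essentially the same route as the paper: the author also reduces the statement to Corollary \ref{t10} plus the fact that a noncentral element $a$ with $[F(a):F]<|F|$ must be algebraic over $F$, citing the proof of Theorem 4.20 in Lam for exactly the partial-fraction independence of $\{(\alpha-c)^{-1}\}_{c\in F}$ that you write out explicitly. The only cosmetic difference is that you argue in the contrapositive direction and spell out the details that the paper delegates to the citation.
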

\begin{proof}
Assume that $dim_F(D)<|F|$. Since $D$ is a non-commutative ring we conclude that $F$ is infinite. Now similar to the proof of \cite[Theorem 4.20]{lam}, we conclude that each $r\in D\setminus F$ is algebraic over $F$ and hence we are done by Corollary \ref{t10}.
\end{proof}

For the proof of the previous corollary note that in fact by the proof of \cite[Theorem 4.20]{lam}, if $D$ is a division ring with center $F$, $a\in D\setminus F$ and $[F(a):F]<|F|$, then $a$ is algebraic over $F$.

\section{Certain Maximal Subrings of Division Rings}
In this section we interested to show that certain maximal subrings of division rings are valuation like non-field maximal subrings of field. First we give a result similar to the extension of a valuation from the center $F$ of a division ring $D$ to a valuation for $D$. Before present the next result let us remind the reader a fact about valuation rings in division rings. Let $D$ be a division ring which is finite over its center $F$ and $V$ be a valuation for $F$. Then $D$ has a total valuation ring $W$ such that $W\cap F=V$ if and only if the set $T=\{\alpha\in D\ |\ \alpha\ \text{is integral over V}\ \}$ is a subring of $D$, see \cite[Theorem 8.12]{marbo}. Now we have the following result without assuming $D$ is finite over its center for extension of a maximal subring $V$ from the center $F$ of a division ring $D$ to a maximal subring $W$ of $D$ with $W\cap F=V$.

\begin{prop}\label{t12}
Let $D$ be a division ring which is algebraic over its center, say $F$. Assume that $V$ is a maximal subring of $F$ which is not a field and $x\in F\setminus V$. If $T=\{\alpha\in D\ |\ \alpha\ \text{is integral over V}\ \}$ is a subring of $D$, then $D$ has a maximal subring $W$ such that $T\subseteq W$, $x\notin T$ and $F\cap W=V$.
\end{prop}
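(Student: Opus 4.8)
My plan is to produce $W$ by a Zorn's lemma argument on the subrings of $D$ that contain $T$ but avoid $x$, the crux being that adjoining $x$ to \emph{any} subring containing $T$ already yields $D$. I begin with preliminaries. As $V$ is a non-field maximal subring of the field $F$, it is a one-dimensional valuation ring with quotient field $F$ (recalled in the introduction), so from $x\in F\setminus V$ we get that $a:=x^{-1}$ is a nonzero non-unit of $V$. Then $V\subseteq T$, and $x\notin T$: a monic relation for $x$ over $V$, multiplied through by an appropriate power of $a$ and simplified using $ax=1$, would place $x$ in $V$. The key preliminary is the identity $V[x]=F$, which holds simply because $V[x]$ is a subring of $F$ strictly larger than $V$ and $V$ is maximal.

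The main step, and the step I expect to be the real obstacle, is to show $T[x]=D$, where $T[x]$ denotes the subring of $D$ generated by the subring $T$ and the central element $x$. Take $d\in D$. Since $D$ is algebraic over its center $F$, $d$ satisfies a monic relation $d^{m}+c_{m-1}d^{m-1}+\cdots+c_0=0$ with $c_i\in F=V[x]$. Writing each $c_i$ as a $V$-polynomial in $x$ of bounded degree and using $ax=1$, one checks $a^{k}c_i\in V$ for all $i$ once $k$ is large enough. Put $y:=a^{k}d$; because $a$ is central, powers of $a$ can be moved past $d$, and the relation for $d$ becomes a monic relation for $y$ whose coefficients are the elements $a^{k(m-i)}c_i=a^{k(m-i-1)}(a^{k}c_i)\in V$. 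Hence $y$ is integral over $V$, so $y\in T$, and therefore $d=x^{k}y\in T[x]$. As $d$ was arbitrary, $T[x]=D$. The point where everything must come together is exactly here: algebraicity of $D$ over its center, centrality of $x$, and the identity $V[x]=F$ all enter, even though the manipulation itself is routine once these are in place.

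Finally I would apply Zorn's lemma to the set $\Sigma$ of all subrings $S$ of $D$ with $T\subseteq S$ and $x\notin S$. It is nonempty, containing $T$, and is closed under unions of chains, so it has a maximal element $W$. If $W$ is strictly contained in a subring $W'$ of $D$, then $W'\notin\Sigma$ although $T\subseteq W'$, forcing $x\in W'$; then $D=T[x]\subseteq W'$, so $W'=D$. Thus $W$ is a maximal subring of $D$. Moreover $V\subseteq T\subseteq W$ while $x\notin W$, so $W\cap F$ is a subring of $F$ with $V\subseteq W\cap F\subsetneq F$, and maximality of $V$ in $F$ forces $W\cap F=V$. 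Combined with $T\subseteq W$ and $x\notin T$, this establishes all the claimed properties of $W$.
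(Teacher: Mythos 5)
Your proposal is correct and follows essentially the same route as the paper: note that $x^{-1}\in V$ since $V$ is a one-dimensional valuation ring, clear denominators in a monic relation over $F=V[x]$ to show that $a^k d$ is integral over $V$ (so $T[x]=D$), and then apply Zorn's lemma to get a maximal subring $W\supseteq T$ avoiding $x$, with $W\cap F=V$ forced by the maximality of $V$. The only cosmetic difference is that the paper writes elements of $F$ in the form $v\alpha^{-k}$ rather than as polynomials in $x$, which amounts to the same computation.
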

\begin{proof}
Since $V$ is not a field, we infer that $V$ is a one-dimensional valuation domain. Therefore $\alpha=x^{-1}\in V$. Hence $F=V[\alpha^{-1}]$, for $V$ is a maximal subring of $F$. Therefore each element of $F$ is of the form $v\alpha^{-k}$ for some $v\in V$ and $k\geq 0$. Now by definition of $T$ we have $\alpha^{-1}\notin T$, for $V$ is integrally closed in $F$ and $\alpha^{-1}\in F\setminus V$. We prove that $T[\alpha^{-1}]=D$. Assume that $y\in D$, since $y$ is algebraic over $F$, we conclude that there exist $n$ and $a_0,\ldots, a_{n-1}$ in $F$ such that $y^{n}+a_{n-1}y^{n-1}+\cdots+a_1y+a_0=0$. Thus there exist $v_i\in V$ and $m\geq 0$ such that $a_i=v_i\alpha^{-m}$. Therefore $y^{n}+v_{n-1}\alpha^{-m}y^{n-1}+\cdots+v_1\alpha^{-m}y+v_0\alpha^{-m}=0$. Multiplying this equation by $\alpha^{mn}$ (note $\alpha\in F$), we obtain that $(\alpha^{m}y)^n+v_{n-1}(\alpha^m y)^{n-1}+v_{n-2}\alpha^{m}(\alpha^my)^{n-2}+\cdots+v_1\alpha^{m(n-2)}(\alpha^my)+v_0\alpha^{m(n-1)}=0$. Therefore $\alpha^my\in T$. Hence $y\in T[\alpha^{-1}]$, which shows that $T[\alpha^{-1}]=D$. Thus by a natural use of Zorn's Lemma, $T$ has a maximal subring $W$ such that $T\subseteq W$ and $\alpha^{-1}\notin W$. Finally, note that $W\cap F$ is a subring of $F$ which contains $V$ but not $\alpha^{-1}$. Thus $W\cap F=V$, by maximality of $V$ and hence we are done.
\end{proof}

In the next result we prove that if a maximal subring of a division ring is a left duo ring, then it has a similar properties in commutative case (i.e., properties of maximal subrings in fields).

\begin{prop}\label{t13}
Let $D$ be a division ring with a maximal subring $R$. If $R$ is a left duo ring, then either $R$ is a division or $R$ is an Ore $G$-domain with exactly one nonzero prime ideal $M=R\setminus U(R)$. Moreover, in fact $R$ is a duo ring, and if $R$ is not a division ring, then $R$ is a valuation for $D$ (i.e., for each $x\in D$ either $x\in R$ or $x^{-1}\in R$), $Max_l(R)=Max_r(R)=Max(R)=Spec(R)\setminus\{0\}=\{M\}$, and for each $d\in D^*$, we have $dRd^{-1}=R$ and $dMd^{-1}=M$.
\end{prop}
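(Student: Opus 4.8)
The plan is to reduce everything to one structural fact: that a duo maximal subring $R$ of $D$ which is not a division ring is \emph{normal} in $D$, in the sense that $N(R)=D^*\cup\{0\}$. First I would record that, by the remark following Lemma~\ref{t1}, $R$ being left duo is equivalent to $R$ being duo; in particular $xR=Rx$ for every $x\in R$. If $R$ is a division ring we are done, so assume not; since $R\subseteq D$ it is a domain, hence it has a nonzero non-unit $a$, and $aR=Ra$ because $R$ is duo. If $D$ is commutative the conclusion is exactly the classical description of non-field maximal subrings of a field recalled in the introduction, so I would henceforth assume $D$ non-commutative and invoke Theorem~\ref{t4} for this $a$: then $X=\{1,a,a^2,\dots\}$ is an Ore set, $D=RX^{-1}=X^{-1}R=\{ra^{-n}\mid r\in R,\ n\ge 0\}$, every nonzero one-sided ideal of $R$ contains a power of $a$, $R$ is an Ore $G$-domain with division ring of quotients $D$, $a\in J(R)$, every nonzero prime ideal of $R$ contains $a$, and for each $\lambda\in N(R)$ either $\lambda\in R$ or $\lambda^{-1}\in R$. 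This already delivers the ``$R$ is an Ore $G$-domain'' part of the statement.

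Next comes the key step: $N(R)=D^*\cup\{0\}$. The set $N(R)\cap D^*=\{d\in D^*\mid dRd^{-1}=R\}$ is a subgroup of $D^*$. For nonzero $r\in R$ we have $rRr^{-1}=(rR)r^{-1}=(Rr)r^{-1}=R$, so $r\in N(R)$; and $a^{-1}Ra=a^{-1}(Ra)=a^{-1}(aR)=R$, so $a^{-1}\in N(R)$. Hence $N(R)\cap D^*$ contains the subgroup of $D^*$ generated by $R\setminus\{0\}$ and $a^{-1}$; but by Theorem~\ref{t4} every element of $D^*$ is of the form $ra^{-n}$ with $r\in R\setminus\{0\}$ and $n\ge 0$, so that subgroup is all of $D^*$. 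Together with part $(6)$ of Theorem~\ref{t4} this immediately shows $R$ is a valuation ring for $D$: any $x\in D^*$ lies in $N(R)$, hence $x\in R$ or $x^{-1}\in R$. It also gives $dRd^{-1}=R$ for every $d\in D^*$, and since conjugation by $d$ is then an automorphism of $R$ and automorphisms preserve units, $dMd^{-1}=M$ where $M:=R\setminus U(R)$.

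It remains to pin down the ideal structure. Being a total valuation ring of $D$, $R$ is local: the valuation property makes the principal left ideals of $R$ linearly ordered (for $x,y\in R\setminus\{0\}$, $xy^{-1}\in R$ gives $Rx\subseteq Ry$ while $yx^{-1}\in R$ gives $Ry\subseteq Rx$), hence all left ideals are, hence the union of all proper left ideals is the unique maximal left ideal and coincides with $R\setminus U(R)=M$; symmetrically for right ideals, so $Max_l(R)=Max_r(R)=Max(R)=\{M\}$, and $M$ is prime (if $x,y$ were units with $xRy\subseteq M$ then $xy\in M$, impossible). Finally I would show $M$ is the only nonzero prime: if $P$ is a nonzero prime with $P\ne M$, pick $m\in M\setminus P$; since $m\in N(R)$, the set $R':=\bigcup_{n\ge 0}Rm^{-n}$ is a subring of $D$ (closed under products because $Rm^{-i}\cdot Rm^{-j}=R(m^{-i}Rm^{i})m^{-(i+j)}=Rm^{-(i+j)}$) strictly containing $R$ (as $m^{-1}\notin R$), so $R'=D$ by maximality; then for any nonzero $p\in P$ we may write $p^{-1}=rm^{-n}$ with $r\in R$, whence $m^n=pr\in P$, contradicting $m\notin P$ and $P$ prime (here $mr=r'm$, by the duo property, lets one run the usual induction $m^n\in P\Rightarrow m\in P$). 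Hence $Spec(R)\setminus\{0\}=\{M\}$.

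The main obstacle, and the one genuinely new idea, is the passage to $N(R)=D^*\cup\{0\}$: this is what upgrades ``duo'' to full normality and thereby forces $R$ to be an invariant valuation ring; everything afterwards is bookkeeping with the maximality of $R$ together with standard facts about total valuation rings. A minor point requiring separate care is the commutative case for $D$, where Theorem~\ref{t4} is unavailable but the classical structure theory of maximal subrings of fields furnishes all the stated conclusions directly.
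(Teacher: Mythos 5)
Your proof is correct and follows essentially the same route as the paper: reduce left duo to duo via Lemma \ref{t1}, feed a nonzero non-unit into Theorem \ref{t4}, use the representation $d=ra^{-n}$ to show $dRd^{-1}=R$ for all $d\in D^*$, and deduce the valuation property and the uniqueness of the nonzero prime from there. Your two small deviations --- treating the commutative case separately (which the appeal to Theorem \ref{t4} technically requires) and obtaining the valuation property from $N(R)=D^*\cup\{0\}$ together with part $(6)$ of Theorem \ref{t4} rather than rerunning the integrality argument with Remark \ref{t3} --- are both sound and, if anything, slightly cleaner.
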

\begin{proof}
If $R$ is a division ring we are done. Hence assume that $0\neq x\in R\setminus U(R)$. Thus $R[x^{-1}]=D$ by maximality of $R$. Since $R$ is a left duo ring, we conclude that $xR\subseteq Rx$ and therefore $xR=Rx$ (hence $R$ is a duo ring), by Lemma \ref{t1}. Thus $x^{-1}R=Rx^{-1}$ which implies that $D=R[x^{-1}]=R+Rx^{-1}+Rx^{-2}+\cdots$. Hence each element of $D$ is of the form $rx^{-n}$ for some $r\in R$ and $n\geq 0$ (note $x\in R$ and see the proof of Theorem \ref{t4}). It is clear that by Theorem \ref{t4}, $R$ is an Ore $G$-domain with division ring of quotients $D$. Since $0\neq x\in R\setminus U(R)$, we infer that $xR=Rx$ is a nonzero proper ideal of $R$, therefore $R$ has a nonzero prime ideal. Now assume that $M$ is a nonzero prime ideal of $R$ and $0\neq y\in M$. Thus $y^{-1}=rx^{-n}$ for some $r\in R$ and $n\geq 0$. Hence $x^n=yr\in M$. Since $M$ is prime and $R$ is duo we conclude that $x\in M$. Thus $R\setminus U(R)\subseteq M$ and therefore $M=R\setminus U(R)$ is the only nonzero prime ideal of $R$. Now note that since $R$ is a duo ring we infer that for each $a\in R$ we have $aR=Ra$ and therefore $aRa^{-1}=R$. Also by $aRa^{-1}=R$, we immediately deduce that $aMa^{-1}$ is a nonzero prime ideal of $R=aRa^{-1}$ and thus $aMa^{-1}=M$. Now if $d\in D^*$, then $d=rx^{-n}$ for some $r\in R$ and $n\geq 0$. Hence we have $dRd^{-1}=rx^{-n}Rx^nr^{-1}$. Since $x\in R$, we have $x^{-n}Rx^n=R$, and therefore $dRd^{-1}=rRr^{-1}=R$, for $r\in R$. Similarly we have $dMd^{-1}=M$. Finally, we prove that $R$ is a valuation for $D$. Assume that $d\in D$ and $R$ does not contain $d^{-1}$ and $d$. Thus $R[d]=D=R[d^{-1}]$, for $R$ is a maximal subring of $D$. Since $Rd=dR$, similar to the proof of $(6)$ of Theorem \ref{t4}, we conclude that $D=R+Rd+\cdots+Rd^{k}$ for some $k$. Since $Rd^i=d^iR$, we conclude that $R$ is a division ring, by Remark \ref{t3}, which is absurd. Thus $R$ is a valuation for $D$.
\end{proof}

In the next result we see that a maximal subring of a division ring given in Theorem \ref{t4} is a bounded ring; moreover, in a certain case it is a discrete valuation ring too.

\begin{prop}\label{t14}
Let $D$ be a division ring, $R$ be a maximal subring of $D$ and $\beta\in R$ such that $\beta R=R\beta\neq R$. Then $\bigcap_{n=1}^\infty R\beta^n=0$ and $R$ is a (left/right) bounded ring. In particular, if $R$ is a left duo ring and $R\beta$ is a prime ideal of $R$, then $R$ is a discrete valuation ring for $D$.
\end{prop}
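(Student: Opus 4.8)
The plan is to reduce everything to Theorem \ref{t4} applied with $a=\beta$. (If $D$ is commutative then $R$ is a rank-one valuation domain and all of the assertions are classical, so we may assume $D$ is noncommutative.) First I would note that $\beta$ is a non-unit of $R$ (otherwise $R\beta=R$) and we may assume $\beta\ne 0$; hence Theorem \ref{t4} applies and gives: $X=\{1,\beta,\beta^2,\dots\}$ is a left/right Ore set, $D=X^{-1}R=RX^{-1}$, every nonzero one-sided ideal of $R$ contains a power of $\beta$, and $R$ is an Ore domain whose division ring of fractions is $D$; also $\beta^nR=R\beta^n$ for all $n\ge 1$.

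For $\bigcap_{n\ge 1}R\beta^n=0$ I would argue by contradiction, taking $0\ne y$ in the intersection. Writing $y^{-1}\in D=X^{-1}R$ as $y^{-1}=\beta^{-m}r$ with $m\ge 0$ and $r\in R$ gives $\beta^m=ry$; on the other hand $y\in R\beta^{m+1}$, so $y=t\beta^{m+1}$ for some $t\in R$, whence $\beta^m=rt\beta^{m+1}=(rt\beta)\beta^m$. Cancelling $\beta^m\ne 0$ in the domain $R$ yields $rt\beta=1$, so $\beta^{-1}=rt\in R$ in $D$ and thus $\beta\in U(R)$, contradicting $R\beta\ne R$. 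The boundedness of $R$ is then immediate from Theorem \ref{t4}(2): every nonzero right (resp. left) ideal $I$ of $R$ contains some $\beta^n$, hence contains the nonzero two-sided ideal $\beta^nR=R\beta^n$; so $R$ is right and left bounded.

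For the last assertion assume moreover that $R$ is a left duo ring (equivalently a duo ring, by the remark after Lemma \ref{t1}) and that $R\beta$ is a prime ideal of $R$. Since $\beta$ is a nonzero non-unit, $R$ is not a division ring, so Proposition \ref{t13} applies and tells us that $R$ is a valuation ring for $D$ whose unique nonzero prime ideal is $M=R\setminus U(R)$; as $R\beta=\beta R$ is a nonzero prime ideal, $R\beta=M$. An easy induction from $\beta R=R\beta$ gives $M^n=R\beta^n$ for all $n$, so $\bigcap_{n\ge 1}M^n=0$ by the first part. Finally I would show that every nonzero one-sided ideal of $R$ is a power of $M$: given a nonzero right ideal $I$, let $n$ be least with $\beta^n\in I$, so $M^n=\beta^nR\subseteq I$; for $0\ne x\in I$, using $\bigcap_kM^k=0$ choose the largest $k$ with $x\in M^k=\beta^kR$ and write $x=\beta^ku$, where necessarily $u\notin M$ (else $x\in M^{k+1}$), so $u\in U(R)$ and $xR=\beta^kR$, giving $\beta^k\in I$, hence $k\ge n$ and $x\in M^n$; thus $I=M^n$, and symmetrically for left ideals. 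Therefore $R$ is a discrete valuation ring for $D$.

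The only step needing genuine care is $\bigcap_{n}R\beta^n=0$: the point is to use the descriptions $D=X^{-1}R$ and $y\in R\beta^{m+1}$ at the \emph{same} exponent $m$ to produce a one-sided inverse of $\beta$ inside $R$, and then to upgrade it to $\beta^{-1}\in R$ because $D$ is the division ring of fractions of the Ore domain $R$. The remaining parts---boundedness and the valuation-theoretic conclusion via Proposition \ref{t13}---are routine.
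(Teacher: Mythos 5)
Your proof is correct and follows essentially the same route as the paper: reduce everything to Theorem \ref{t4} and Proposition \ref{t13}, derive the contradiction $\beta\in U(R)$ if the intersection were nonzero, and read off boundedness from part (2) of Theorem \ref{t4}. The only differences are cosmetic: the paper obtains $\bigcap_{n}R\beta^n=R\beta^m$ directly from the boundedness just proved (a nonzero ideal must contain, hence equal, some $R\beta^m$) rather than via your element computation with $y^{-1}=\beta^{-m}r$, and it concludes the discrete valuation property from the normal form $u\beta^m$ of elements (citing Lam, Exercise 19.7) instead of your equivalent classification of the nonzero one-sided ideals as powers of $M$.
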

\begin{proof}
First note that for each $n\in\mathbb{N}$, $R\beta^n=\beta^nR$ is an ideal of $R$ and by $(2)$ of Theorem \ref{t4}, for each nonzero left/right ideal $I$ of $R$, $I$ contains $\beta^n$ for some natural number $n$, and therefore $I$ contains $R\beta^n=\beta^nR$. Thus $R$ is a left/right bounded ring (note that $u.dim_l(R)=u.dim_r(R)=1$ and therefore the ideal $R\beta^n=\beta^nR$ is essential as a left/right ideal). Hence if $B:=\bigcap_{n=1}^\infty R\beta^n\neq 0$, then there exists a natural number $m$ such that $B=R\beta^m$. Thus $R\beta^m=R\beta^{m+1}$, i.e., $\beta$ is a unit of $R$ which is absurd. Thus $B=0$. Now assume that $R$ is a left duo ring and $M=R\beta$ is a prime ideal of $R$. Thus by Proposition \ref{t13}, we conclude that $R$ is a valuation ring for $D$ and $M$ is the only nonzero prime ideal of $R$, i.e., $U(R)=R\setminus M$. Since $B=0$, we deduce that each element of $R$, is of the form $u\beta^{m}$ for some $u\in U(R)$ and $m\geq 0$ and clearly $m$ is unique. This also shows that each element of $D$ is of the form $u\beta^m$ for some $u\in U(R)$ and a unique element $m\in\mathbb{Z}$. Thus $R$ is a left discrete valuation for $D$ (see \cite[Exersice 19.7]{lam}). Similarly $R$ is a right discrete valuation ring too and hence we are done.
\end{proof}

The following which shows when a maximal subring of a division ring is an abelian valuation is in order.

\begin{prop}\label{t15}
Let $D$ be a division ring and $R$ is a maximal subring of $D$ which is not a division ring. Then $R$ is an abelian valuation for $D$ if and only $D^c\subseteq R$. In this case, $R$ is a duo ring.
\end{prop}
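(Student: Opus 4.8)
The plan is to run both implications through the multiplicative commutator subgroup $D^c=[D^*,D^*]$ (the argument is unaffected if $D^c$ is read as the set of commutators rather than the subgroup they generate, since a set of elements together with their inverses lies in $R$ iff it lies in $U(R)$), and to use the rigidity results already available for a maximal subring $R$ that is not a division ring.

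For the direction $(\Rightarrow)$ I would argue: if $R$ is an abelian valuation ring for $D$, then by definition $R$ is an invariant total valuation ring whose value group $\Gamma_R=D^*/U(R)$ is abelian. Commutativity of $\Gamma_R$ says exactly that $abU(R)=baU(R)$ for all $a,b\in D^*$, i.e.\ $a^{-1}b^{-1}ab\in U(R)$ for all $a,b$; hence $D^c\subseteq U(R)\subseteq R$. For the final clause, invariance gives $aR=Ra$ for every $0\neq a\in R$, so if $I$ is any left ideal of $R$, $a\in I$ and $r\in R$, then $ar\in aR=Ra\subseteq I$; thus every left ideal is two-sided, and (by the consequence of Lemma \ref{t1}, or symmetrically) $R$ is a duo ring.

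For $(\Leftarrow)$, assume $D^c\subseteq R$. First I would show $R$ is invariant: for $x\in D^*$ and $0\neq r\in R$ the commutator $xrx^{-1}r^{-1}\in D^c\subseteq R$, so $xrx^{-1}=(xrx^{-1}r^{-1})r\in R$; hence $xRx^{-1}\subseteq R$, and applying this to $x^{-1}$ forces $xRx^{-1}=R$. Consequently $N(R)=D$, which strictly contains $U(R)\cup\{0\}$ because $R$ is not a division ring; so alternative $(1)$ of Theorem \ref{t5} fails, and since $R$ is not a division ring we land in Theorem \ref{t5}$(3)$, i.e.\ $R$ satisfies Theorem \ref{t4} (take $a$ to be any nonzero non-unit of $R$). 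Then part $(6)$ of Theorem \ref{t4}, applied to every $\lambda\in N(R)=D$, shows that for each $x\in D$ either $x\in R$ or $x^{-1}\in R$; that is, $R$ is a total valuation ring for $D$. (Alternatively this can be obtained directly as in the last paragraph of the proof of Proposition \ref{t13}, via the finite‑normalizing‑extension obstruction of Remark \ref{t3}.) Finally, $D^c\subseteq R$ together with $D^c$ being a subgroup of $D^*$ gives $D^c\subseteq U(R)$, and $U(R)\trianglelefteq D^*$ by invariance ($xU(R)x^{-1}=U(xRx^{-1})=U(R)$); hence $\Gamma_R=D^*/U(R)$ is a group with $[D^*,D^*]\subseteq U(R)$, i.e.\ an abelian group. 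So $R$ is an invariant abelian valuation ring for $D$, and it is duo by the argument of the $(\Rightarrow)$ paragraph.

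The step I expect to be the main obstacle is the upgrade, in the $(\Leftarrow)$ direction, from \emph{$R$ is conjugation‑invariant} to \emph{$R$ is a total valuation ring}: invariance alone does not make an arbitrary subring a valuation ring, and one genuinely needs to feed in both the maximality of $R$ and the hypothesis that $R$ is not a division ring (through Theorem \ref{t4}$(6)$, equivalently through Remark \ref{t3}). The remaining care is bookkeeping around the standard fact that, for an invariant valuation ring, the value group is abelian precisely when all multiplicative commutators lie in the unit group.
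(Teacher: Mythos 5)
Your proof is correct. It differs from the paper's in how the main equivalence is established: the paper disposes of ``$R$ is an abelian valuation for $D$ iff $D^c\subseteq R$'' in one line by citing Cohn's Lemma 9.2.2 on skew fields, remarking only that the maximality of $R$ is what makes that lemma applicable; the duo claim is then proved exactly as in your $(\Rightarrow)$ paragraph, by extracting $aR\subseteq Ra$ from $aba^{-1}b^{-1}\in R$ and invoking Lemma \ref{t1}. What you do differently is to reconstruct the content of the cited lemma inside the paper's own machinery: you derive invariance of $R$ from $D^c\subseteq R$ via the commutator trick, conclude $N(R)=D$, and then obtain totality from part $(6)$ of Theorem \ref{t4} (or Remark \ref{t3}, as in Proposition \ref{t13}); you correctly identify this upgrade as the crux, since invariance alone never forces totality (e.g.\ $\mathbb{Z}\subseteq\mathbb{Q}$), and it is precisely here that maximality and the hypothesis that $R$ is not a division ring are consumed. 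The remaining steps --- normality of $U(R)$ in $D^*$ and the identification of the abelian value group with $D^*/U(R)$ --- are routine and handled correctly. Your version buys a self-contained argument that exposes where each hypothesis enters; the paper's buys brevity at the cost of leaving the reader to verify that the external lemma really applies once maximality is fed in.
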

\begin{proof}
This is an immediate consequence of \cite[Lemma 9.2.2]{cohndr}, for $R$ is a maximal subring of $D$. For the final part note that for each $a,b\in R$, $aba^{-1}b^{-1}\in R$. Thus $aba^{-1}\in Rb\subseteq R$ and therefore $ab\in Ra$. Hence $aR\subseteq Ra$ which immediately implies that $aR=Ra$, by Lemma \ref{t1}. Thus $R$ is a duo ring and hence we are done.
\end{proof}

The following result is about the existence and also the form of maximal subrings in an existentially complete division ring.

\begin{prop}\label{t16}
Let $D$ be an existentially complete division ring over a field $K$. Then the following hold:
\begin{enumerate}
\item $D$ has a maximal subring of the form $C_D(x)$ where $D$ is finite over it.
\item If $R$ is a maximal subring of $D$ with $K\subsetneq C_R(R)$, then $R=C_D(x)$ for some $x\in D\setminus K$, which is algebraic over $K$.
\end{enumerate}
\end{prop}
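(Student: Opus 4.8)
The plan is to combine the structural dichotomy of Theorem \ref{t6} with the principle that, $D$ being existentially complete, any configuration that can be realized in some division-ring extension of $D$ over $K$ is already realized in $D$. I will use two standard facts. From \cite{hfad}: an existentially complete division ring $D$ over $K$ is non-commutative and has center $K$ — indeed, for $z\in Z(D)\setminus K$ one has $tz\neq zt$ in the universal field of fractions of the free product $D*_K K[t]$, so by existential completeness $D$ would contain an element not commuting with $z$, which is absurd. From Cohn's theory \cite{cohndr}: a division ring, a field extension of $K$, and a skew Laurent ring over a field are firs, the free product of firs over a common subfield is a fir, and every fir embeds in its universal field of fractions (a division ring). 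I also use the Double Centralizer Theorem (\cite[Theorem 15.4]{lam}, \cite[Corollary 3.3.9]{cohndr}) exactly as in Corollary \ref{t10} and Theorem \ref{t6}.

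\emph{Part (1).} First I produce a non-central element algebraic over $K$: if $L/K$ is a finite field extension and $a\in D\setminus K$, then in the universal field of fractions $\widetilde D$ of $D*_K L$ a primitive element $\theta$ of $L$ satisfies $\theta a\neq a\theta$, so by existential completeness $D$ contains $x$ algebraic over $K$ with $xa\neq ax$; in particular $x\notin K=Z(D)$. Now consider the family of centralizers $C_D(z)$ with $z\in D$ non-central and algebraic over $K$; by the Double Centralizer Theorem each satisfies $[D:C_D(z)]_l=[K[z]:K]<\infty$, so an ascending chain of such centralizers has non-increasing integer index and hence stabilizes, and by Zorn's Lemma I may pick a maximal member $R=C_D(x)$ of this family. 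I claim $R$ is a maximal subring. If $R\subsetneq S\subsetneq D$ with $S$ a subring, then $S$ is a domain finite-dimensional over the division ring $R$, hence a division ring, and $K=Z(D)\subseteq R\subseteq S$. Writing $L':=C_D(S)$, we get $K\subseteq L'\subseteq C_D(R)=C_D(C_D(x))=K[x]$, so $L'$ is a subfield of the field $K[x]$. If $L'=K[x]$, then $S\subseteq C_D(K[x])=C_D(x)=R$, a contradiction. If $L'\supsetneq K$, then any $y\in L'\setminus K$ is non-central (as $y\notin Z(D)$) and algebraic over $K$ with $S\subseteq C_D(y)$, so $C_D(y)\supsetneq R$, contradicting maximality of $R$. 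Finally, if $L'=K$, then $Z(S)\subseteq C_D(S)=K\subseteq S$, so $Z(S)=K$; applying the Double Centralizer Theorem inside $S$ to $x$ (algebraic over $K=Z(S)$) gives $[S:C_S(x)]_l=[K[x]:K]$, while $C_S(x)=S\cap C_D(x)=R$, so $[S:R]_l=[K[x]:K]=[D:R]_l$, whence the tower formula forces $[D:S]_l=1$, i.e. $S=D$, a contradiction. Thus $R=C_D(x)$ is a maximal subring with $[D:R]_l=[D:R]_r=[K[x]:K]<\infty$.

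\emph{Part (2).} Since $Z(D)=K$, the hypothesis $K\subsetneq C_R(R)$ excludes case $(1)$ of Theorem \ref{t6} (where $C_R(R)=R\cap F\subseteq K$) and the subcase $({\bf 2-CE})$ (where $C_R(R)=F=K$), so $({\bf 2-CN})$ holds: $F=K\subseteq R$, and for every $\beta\in C_R(R)\setminus K$ we have $R=C_D(\beta)$, a division ring, with either $({\bf 2-CNF})$ or $({\bf 2-CNI})$. In case $({\bf 2-CNF})$ the element $\beta$ is algebraic over $K$ and lies in $D\setminus K$, so $x=\beta$ is as required. To rule out $({\bf 2-CNI})$ — where $\beta$ is transcendental over $K$ and $N(R)=R$ — let $\tau$ be the $K$-automorphism of $K(\beta)$ with $\tau(\beta)=\beta+1$, and let $\widetilde D$ be the universal field of fractions of the fir $D*_{K(\beta)}K(\beta)[\lambda^{\pm1};\tau]$; in $\widetilde D$ one has $\lambda\neq0$ and $\lambda\beta\lambda^{-1}=\beta+1$, so by existential completeness $D$ contains $\lambda\neq0$ with $\lambda\beta\lambda^{-1}=\beta+1$. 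Then $\lambda R\lambda^{-1}=\lambda C_D(\beta)\lambda^{-1}=C_D(\beta+1)=C_D(\beta)=R$, so $\lambda\in N(R)=R=C_D(\beta)$, forcing $\lambda\beta=\beta\lambda$ and hence $\beta=\beta+1$, absurd. So $({\bf 2-CNI})$ is impossible, and $x=\beta$ works.

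\emph{Expected main obstacle.} The substantive work is the toolkit in the first paragraph: verifying that the relevant free products of firs ($D*_K L$, $D*_K K[t]$, $D*_{K(\beta)}K(\beta)[\lambda^{\pm1};\tau]$) are again firs with a universal field of fractions, and that the desired relations hold there and genuinely involve elements outside $D$ — this is Cohn's coproduct machinery. Once that is in place, both parts are a formal application of existential completeness together with Theorem \ref{t6} and the Double Centralizer Theorem. (Part (1) tacitly assumes $K$ admits a proper finite extension; when $K$ is algebraically closed $D$ has no non-central element algebraic over $K$ and the assertion should be read accordingly.)
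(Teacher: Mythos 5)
Your proof is correct, but it takes a genuinely different route from the paper's in both parts. For part (1) the paper invokes \cite[Corollary 14.12]{hfad} --- $D$ contains an algebraically closed subfield of infinite transcendence degree over the prime field $K=C_D(D)$, hence the algebraic closure of $K$ --- to obtain a noncentral element algebraic over the center, and then simply cites Corollary \ref{t10}; you instead manufacture such an element by an amalgamation argument and then verify directly that a member of largest size in the family $\set{C_D(z)\mid z\in D\setminus K,\ z\ \text{algebraic over}\ K}$ is a genuine maximal subring. That verification is a real contribution: Corollary \ref{t10} as stated only yields a maximal subring \emph{containing} $C_D(\beta)$, so your three-case analysis via $L'=C_D(S)$ and the double centralizer theorem applied inside $S$ closes a step the paper leaves implicit in concluding that the maximal subring has the form $C_D(x)$. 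For part (2) the paper supposes every $\beta\in C_R(R)\setminus K$ is transcendental, shows $C_R(R)=K(\beta)=K(\beta^2)$ using \cite[Lemma 14.3]{hfad} (namely $C_D(\alpha)=C_D(\beta)$ forces $K(\alpha)=K(\beta)$), and gets a contradiction from $K(\beta^2)\subsetneq K(\beta)$; you instead rule out case $({\bf 2-CNI})$ by using existential completeness to realize in $D$ a conjugation $\lambda\beta\lambda^{-1}=\beta+1$ (an HNN-type extension via the coproduct $D*_{K(\beta)}K(\beta)[\lambda^{\pm1};\tau]$) and then exploiting $N(R)=R$. Your route is heavier --- it requires Cohn's coproduct theorems for firs where the paper needs only two quotable lemmas from \cite{hfad} --- but it is more self-contained and makes the model-theoretic mechanism explicit. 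One small remark: your closing caveat about $K$ being algebraically closed never arises, since \cite[Corollary 14.5]{hfad} identifies $K$ with the prime subfield of $D$, which always admits proper finite extensions.
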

\begin{proof}
First note that $C_D(D)=K$ by \cite[Corollary 14.5]{hfad} which is the prime subfield of $D$ (i.e., $K=\mathbb{Q}$ or $\mathbb{Z}_p$ for a prime number $p$) and $D$ contains an algebraically closed field $E$ of infinite transcendental degree over $K$, by \cite[Corollary 14.12]{hfad}. Thus $D$ contains the algebraic closure of $K$, say $L$. Since $C_D(D)=K$ and $K\subsetneq L$ is an algebraic extension, we conclude that $D$ has a noncentral element which is algebraic over its center (i.e., each element of $L\setminus K$). Therefore by Corollary \ref{t10}, $(1)$ holds. Now assume that $D$ has a maximal subring $R$ with $K\subsetneq C_R(R)$ and each element of $C_R(R)\setminus K$ is not algebraic over $F$. Suppose that $\beta\in C_R(R)\setminus K$, then clearly $R=C_D(\beta)$. We claim that $C_R(R)=K(\beta)$. It is clear that $K(\beta)\subseteq C_R(R)$. Conversely, assume that $\alpha\in C_R(R)\setminus K(\beta)$, then $R=C_D(\alpha)$ and therefore $C_D(\beta)=C_D(\alpha)$. Hence by \cite[Lemma 14.3]{hfad}, $K(\alpha)=K(\beta)$, which is a contradiction for $\alpha\in C_R(R)\setminus K(\beta)=C_R(R)\setminus K(\alpha)$, but $\alpha\in K(\alpha)$. Thus $C_R(R)=K(\beta)$. Now since $\beta$ is not algebraic over $K$ we immediately conclude that $K(\beta^2)\subsetneq K(\beta)$, but $K(\beta)=C_R(R)=K(\beta^2)$, for $\beta^2\notin K$. Hence we conclude that $C_R(R)/K$ is an algebraic extension and hence we are done.
\end{proof}

We conclude this paper with the following remark. In the next remark we use the notation of \cite[Sec 3.2]{cohndr}, i.e., the Jacobson-Bourbaki correspondence.

\begin{rem}\label{t17}
\begin{enumerate}
\item Let $D$ be a division ring and $E:=End_{\mathbb{Z}}(D)$. Then by \cite[Theorem 3.2.4]{cohndr} (i.e., the Jacobson-Bourbaki correspondence) there exists  an order-reversing bijection between the subdivisions $D'$ of $D$ and closed $\rho(D)$-subrings $F$ of $E$ (with finite topology). Consequently, $D'$ is a maximal subring of $D$ which is a division ring and $[D:D']_l<\infty$ if and only if $E$ has a closed $\rho(D)$-subring $F$ where $\rho(D)$ is a maximal subring of $F$ and $[F:\rho(D)]_r<\infty$. In fact, if $D'$ is a maximal subring of $D$ which is a division ring and $[D:D']_l<\infty$, then $F=End({}_{D'}D)$ (is a closed finite minimal extension of $\rho(D)$ inside $E$); conversely, if $F$ is a (closed) finite (i.e., $[F:\rho(D)]_r<\infty$) minimal extension of $\rho(D)$ in $E$, then $D':=\{x\in D\ |\ (xy)f=x((y)f),\ y\in D,\ f\in F\}$ (note, writing maps on the right) is a maximal subring of $D$ which $[D:D']_l<\infty$; and in fact $[F:\rho(D)]_r=[D:D']_l$.
\item If $F$ is a $\rho(D)$-subring of $E$ with $[F:\rho(D)]_r<\infty$, then by \cite[Ex.4, P.100]{cohndr} or see \cite{jacobdra}, $F$ is a closed subring in $E$ (by finite topology). In particular, if a division ring $D$ has no maximal subring, then for each noncentral $a\in D$, the map $\lambda_a$ (i.e., $x\longmapsto ax$, for each $x\in D$), is not right algebraic over $\rho(D)$. To see this first note that $\rho(D)$ centralizes $\lambda_a$ in $E$. Hence if $\lambda_a$ is right algebraic over $\rho(D)$, then $F:=\rho(D)[\lambda_a]$ is a $\rho(D)$-subring of $E$ which is finite over $\rho(D)$ (i.e., $[F:\rho(D)]_r<\infty$). Therefore by Jacobson-Bourbaki correspondence, we deduce that $D$ has a maximal subring which is absurd.
\item Let $D$ be a division ring and $a\in D$ be a noncentral element. Assume that $\sigma$ be an inner automorphism of $D$, defined by $\sigma(x)=axa^{-1}$, for each $x\in D$. If $\sigma$ is left algebraic over $\rho(D)$ (in $E$), then $a$ is algebraic over the center of $D$. In particular, $D$ has a maximal subring. To see this, let $n\in\mathbb{N}$ and $b_0,b_1,\ldots,b_{n-1}\in D$ such that
    $$\sigma^n+\rho_{b_{n-1}}\sigma^{n-1}+\cdots+\rho_{b_1}\sigma+\rho_{b_0}=0$$
    note that since $\rho(D)\cong D$ is a division ring, we may assume that $\sigma$ satisfies in a monic polynomial. Moreover, since $\sigma$ is invertible, we may assume that $\rho_{b_0}\neq 0$, i.e., $b_0\neq 0$. Take $x=1$ in the above equation we deduce that $1+b_{n-1}+\cdots+b_0=0$ and if we take $x=a^n$, then we obtain that $a^n+a^nb_{n-1}+\cdots+a^nb_1+b_0=0$. Therefore $a^n(1+b_{n-1}+\cdots+b_1)+b_0=0$, i.e., $a^n(-b_0)+b_0=0$. Thus $a^n=1$, for $b_0\neq 0$. Hence $a$ is algebraic over the prime subring of $D$, in particular $a$ is algebraic over the center of $D$. Therefore $D$ has a maximal subring by Corollary \ref{t10}.
\end{enumerate}
\end{rem}

\centerline{\Large{\bf Acknowledgement}}
The author is grateful to the Research Council of Shahid Chamran University of Ahvaz (Ahvaz-Iran) for
financial support (Grant Number: SCU.MM1403.721)


\begin{thebibliography}{mm}

\bibitem{azn}
A. Azarang, On the existence of maximal subrings in commutative Noetherian rings, J. Algebra Appl. {\bf 14} (1) (2015) ID:1450073.


\bibitem{azffs}
A. Azarang, On fields with only finitely many maximal subrings, Houston J. Math {\bf 43} (2) (2017) 299-324.


\bibitem{azconch}
A. Azarang, Conch maximal subrings, Comm. Algebra. {\bf 50} (3) (2022) 1267-1282.

\bibitem{azpa}
A. Azarang, N. Parsa, Maximal subrings up to isomorphism of fields, preprint (submitted) (2023) Arxiv: https://arxiv.org/abs/2308.12306.

\bibitem{azcond}
A. Azarang, The conductor ideals of maximal subrings in non-commutative rings, preprint (submitted) (2024) Arxiv: https://arxiv.org/abs/2406.12890.

\bibitem{azid}
A. Azarang, Classical ideals theory of maximal subrings in non-commutative rings, preprint (submitted) (2024) Arxiv: https://arxiv.org/abs/2406.12891.

\bibitem{azkra}
A. Azarang, O.A.S. Karamzadeh, On the existence of maximal subrings in commutative Artinian rings, J. Algebra Appl. {\bf 9} (5) (2010) 771-778.

\bibitem{azkrf}
A. Azarang, O.A.S. Karamzadeh, Which fields have no maximal subrings?, Rend. Sem. Mat. Univ. Padova, {\bf 126} (2011) 213-228.

\bibitem{azkrc}
A. Azarang, O.A.S. Karamzadeh, On maximal subrings of commutative rings, Algebra Colloq. {\bf 19} (Spec 1) (2012) 1125-1138.

\bibitem{azkrh}
A. Azarang, O.A.S. Karamzadeh, Most commutative rings have maximal subrings, Algebra Colloq. {\bf 19} (Spec 1) (2012) 1139-1154.



\bibitem{cohndr}
P.M. Cohn, Skew fields. Theory of general division rings. Paperback reprint of the hardback edition 1995.
Encyclopedia of Mathematics and its Applications 57. Cambridge: Cambridge University Press. (2008).

\bibitem{adjex}
L.I. Dechene, Adjacent extensions of rings, Ph.D. Dissertation, University of California, Riverside, (1978).

\bibitem{dbsid}
D.E. Dobbs, J. Shapiro, A classification of the minimal ring extensions of an integral domain, J. Algebra 305 (2006) 185-193.

\bibitem{dbsc}
D.E. Dobbs, J. Shapiro, A classification of the minimal ring extensions of certain commutative rings, J. Algebra 308 (2007), 800-821.


\bibitem{dorsy}
T.J. Dorsey, Z. Mesyan, On minimal extension of rings, Comm. Algebra, {\bf 37} (10) (2009) 3463-3486.



\bibitem{frd}
D. Ferrand, J.-P. Olivier, Homomorphismes minimaux d’anneaux, J. Algebra {\bf 16} (1970) 461-471.

\bibitem{good}
K.R. Goodearl, R.B. Warfield, JR , An introduction to noncommutative Noetherian rings", Camberidge University Press, Second Edition, (2004).

\bibitem{hazw}
M. Hazewinkel, N. Gubareni, V.V. Kirichenko, Algebras, rings and modules. Vol. 1. Math. Appl., 575 Kluwer Academic Publishers, Dordrecht, (2004).


\bibitem{hfad}
J. Hirschfeld, W.H. Wheeler, Forcing, arithmetic, division rings. Lecture Notes in Math., Vol. 454 Springer-Verlag, Berlin-New York, (1975).


\bibitem{jacobdra}
N. Jacobson, A note on division rings, Amer. J. Math. {\bf 69} (1947) 27–36.



\bibitem{klein}
A.A. Klein, The finiteness of a ring with a finite maximal subrings. Comm. Algebra {\bf 21} (4) (1993) 1389-1392.

\bibitem{laffey}
T.J. Laffey, A finiteness theorem for rings, Proc. R. Ir. Acad. {\bf 92} (2) (1992) 285-288.

\bibitem{lam}
T.Y. Lam, A first course in noncmmutative rings, Springer-Verlag, Second Edition, (2001).

\bibitem{lam2}
T.Y. Lam, Lectures on modules and rings, Springer-Verlag, (1999).

\bibitem{lee}
T.K. Lee, K.S. Liu, Algebra with a finite-dimensional maximal subalgebra. Comm. Algebra {\bf 33} (1) (2005) 339-342.


\bibitem{macrob}
J.C. MacConnell, J.C. Robson, Noncommutative Noetherian rings, New York, Wilely-Interscience, (1987).

\bibitem{marbo}
H. Marubayashi, H. Miyamoto, A. Ueda, Non-commutative valuation rings and semi-hereditary orders.
K-Monogr. Math., Kluwer Academic Publishers, Dordrecht, (1997).

\bibitem{modica}
M.L. Modica, Maximal subrings, Ph.D. Dissertation, University of Chicago, (1975).

\bibitem{rvn}
L. Rowen, Ring theory. Vol. I. Pure Appl. Math., 127 Academic Press, Inc., Boston, MA, (1988).

\bibitem{abmin}
G. Picavet and M. Picavet-L'Hermitte. About minimal morphisms, in: Multiplicative ideal theory in commutative algebra, Springer-Verlag, New York, (2006) pp. 369-386.

\end{thebibliography}

\end{document}